\theoremstyle{plain}
\newtheorem{theorem}{Theorem}[section]
\newtheorem{lemma}[theorem]{Lemma}                           
\newtheorem{proposition}[theorem]{Proposition}
\newtheorem{corollary}[theorem]{Corollary}
\newtheorem*{remark*}{Remark}
\newtheorem*{remarks*}{Remarks}
\newtheorem{remark}[theorem]{Remark}
\newtheorem{example}[theorem]{Example}
\newtheorem*{example*}{Example}
\newtheorem*{examples*}{Examples}
\newtheorem*{definition*}{Definition}
\newtheorem{question}[theorem]{Question}
\newtheorem*{claim*}{Claim}
\numberwithin{figure}{section}
\numberwithin{equation}{section}
\newcommand{\proofend}{\hspace*{\fill} $\square$\\}
\def\1{\:\!}
\def\2{\;\!}
\def\eps{\varepsilon}
\def\Diffc0{\operatorname{Diff^c_0}}
\def\Sympc0{\operatorname{Symp^c_0}}
\def\vol{\operatorname{vol}}
\def\CC{\mathbb{C}}
\def\NN{\mathbb{N}}
\def\OO{\mathbb{O}}
\def\RR{\mathbb{R}}
\def\ZZ{\mathbb{Z}}
\def\R{\operatorname{\mathbb{R}}}
\begin{document}

\title{\vspace*{0cm} On the large-scale geometry of domains in an exact symplectic 4-manifold}
\begin{abstract}

We show that the space of open subsets of any complete and exact symplectic $4$-manifold has infinite dimension with respect to the symplectic Banach-Mazur distance; the quasi-flats we construct take values in the set of dynamically convex domains.   In the special case of $\mathbb{R}^4$, we therefore obtain the following contrast: the space of convex domains is quasi-isometric to a plane, while the space of dynamically convex ones has infinite dimension.
In the special case of $T^* S^2$, a variant of our construction resolves a conjecture of Stojisavljevi\'{c} and Zhang, asserting that the space of star-shaped domains in $T^* S^2$ has infinite dimension.   Another corollary is that the space of contact forms giving the standard contact structure on $S^3$ has infinite dimension with respect to the contact Banach-Mazur distance.

%


\end{abstract}


\author{Dan Cristofaro-Gardiner and Richard Hind}

\date{\today}

\maketitle

\section{Introduction}

\subsection{Domains in $\mathbb{R}^4$}

Let $U$ and $V$ be open domains in $\mathbb{R}^4$, with its standard symplectic form.  We recall a definition introduced by Ostrover and Polterovich. The (coarse) {\em symplectic Banach-Mazur distance} is defined by 
\[ d_{SM}(U,V) = \inf \lbrace \ln(T) | U \to T \cdot V, V \to T \cdot U \rbrace.\]
Here, we mean the usual convention of scaling, where the capacity scales linearly with $T$; the arrow refers to a symplectic embedding.  

The symplectic Banach-Mazur distance is a kind of cousin of Hofer's metric.  The geometry of Hofer's metric has played a central role in modern symplectic topology, but many basic questions regarding the symplectic Banach-Mazur distance remain poorly understood.  Here, 
we will be interested in the large-scale geometry of this space.
More precisely, recall that a map $\psi: (X_1,d_1) \to (X_2,d_2)$ is called a {\em quasi-isometric embedding} if there are positive constants $A, B > 0$ such that
\[ 1/A d_1(x,y) - B \le d_2(x,y) \le A d_2(x,y) + B.\]
A {\em quasi-isometry} is a quasi-isometric embedding that is quasi-surjective, in the sense that every element in the target is a bounded distance from the image, and we call the {\em quasi-isometry} type of a space its equivalence class up to quasi-isometry.

We are interested here in the following question:

\begin{question}
\label{que:large}
How many linearly independent directions are there in the large scale geometry of the space of domains with respect to $d_{SM}$?
\end{question}

This is one of the first questions that one would like to know about the large-scale geometry.
However, it
has remained open for some time despite attracting interest from experts.
There is 
a variant of the symplectic Banach-Mazur metric, called the {\em fine symplectic Banach-Mazur distance}, where one demands in addition an unknottedness condition on the image of the embedding and for which the answer to Question~\ref{que:large} is known --- there are infinitely many such directions, in any dimension \cite{usher}.
However, the coarse case is a different matter that has been generally considered difficult and requiring of new ideas; see e.g. the discussion in  \cite[p. 11]{rosenzhang}, \cite[p. 235]{usher}. 

To make this precise, recall that a {\em rank $n$ quasi-flat} in a metric space $(X,d)$ is a quasi-isometric embedding $(\mathbb{R}^n,||_\cdot||_{\infty}) \to (X,d)$. The {\em quasi-flat rank} of $(X,d)$ is the supremum, over $n$, such that a rank $n$ quasi-flat in $(X,d)$ exists.   
Our main result answers Question~\ref{que:large}:

\begin{theorem}
\label{thm:main}
The space of star-shaped domains in $\mathbb{R}^4$, with respect to the pseudo-metric $d_{SM}$, has infinite quasi-flat rank.
\end{theorem}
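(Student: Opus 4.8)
The plan is to construct, for each $n$, an explicit $n$-parameter family of star-shaped domains $U_{\mathbf{a}}\subset\mathbb{R}^4$, indexed by $\mathbf{a}=(a_1,\dots,a_n)$ in a positive cone, and to show that $\mathbf{a}\mapsto U_{\mathbf{a}}$ is a quasi-isometric embedding of $(\mathbb{R}^n,\|\cdot\|_\infty)$ into the space of star-shaped domains with $d_{SM}$. The domains should be built so that they are dynamically convex — for instance by taking suitable toric domains, or small perturbations thereof, whose boundary Reeb dynamics is understood — since the abstract states the quasi-flats land among dynamically convex domains and this is what makes the lower bound on $d_{SM}$ tractable. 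The natural mechanism is to encode the parameters $a_i$ into a carefully chosen portion of the ECH spectrum (or a related symplectic capacity): scaling $V$ by $T$ scales its capacities by $T$, and a symplectic embedding $U\to T\cdot V$ forces a monotonicity $c_k(U)\le T\,c_k(V)$ for every $k$. So if one can arrange that the relevant capacities of $U_{\mathbf{a}}$ ``see'' each coordinate $a_i$ essentially independently, the embedding constraints translate into a system of inequalities on $T$ which, combined with the symmetric constraint coming from $V\to T\cdot U$, yields $\ln T\gtrsim \|\mathbf{a}-\mathbf{a}'\|_\infty - B$, giving the lower bound. The upper bound $d_{SM}(U_{\mathbf{a}},U_{\mathbf{a}'})\lesssim \|\mathbf{a}-\mathbf{a}'\|_\infty + B$ should come from writing down explicit embeddings — e.g. inclusions after a bounded rescaling — which is the easier direction.

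Concretely, I expect the building block to be a domain that is mostly a fixed large ellipsoid or ball (to guarantee star-shapedness and to ``absorb'' the bulk of the volume) with $n$ thin ``fingers'' or handles attached, the $i$-th finger having a shape governed by $a_i$; the combinatorics of ECH capacities of such concatenations/unions is by now well developed (via the combinatorics of lattice points, the ``$T$'' and $\oplus$ operations on capacities, and weight expansions). One wants the fingers to be long and thin enough that (i) each contributes a distinguished block of ECH capacities whose values are controlled by $a_i$ and are not affected by $a_j$ for $j\ne i$, and (ii) the whole domain remains dynamically convex, which one checks by computing the Conley–Zehnder indices of the closed Reeb orbits on the (smoothed) boundary and verifying the $\geq 3$ bound. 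The key point is that, unlike for genuinely convex domains where the capacities are essentially determined by a single ``size'' parameter (leading to the quasi-isometry with a plane mentioned in the abstract), the dynamically convex class is flexible enough to realize independent values in many distinct capacity slots simultaneously.

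The main obstacle I anticipate is establishing the \emph{lower} bound on $d_{SM}$ with the correct, parameter-independent quasi-isometry constants: one must rule out the possibility that a clever symplectic embedding $U_{\mathbf{a}}\hookrightarrow T\cdot U_{\mathbf{a}'}$ with $T$ much smaller than $e^{\|\mathbf{a}-\mathbf{a}'\|_\infty}$ nonetheless exists by ``rearranging'' the fingers — symplectic embeddings are famously flexible (folding, multiple covers in the image, etc.), so capacity obstructions must be chosen that are genuinely robust under this flexibility. This is exactly the difficulty flagged in the introduction as having kept Question~\ref{que:large} open, and the resolution should hinge on using a family of obstructions (the full ECH spectrum, or index-positivity/dynamical-convexity-sensitive invariants) strong enough to detect each $a_i$, together with a pigeonhole/counting argument showing that $n$ independent constraints cannot all be satisfied with a small $T$. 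A secondary technical burden is verifying dynamical convexity of the perturbed toric models uniformly in $\mathbf{a}$, which requires care near the corners and along the fingers but is a finite computation in local coordinates once the model is fixed.
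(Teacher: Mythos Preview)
Your broad strategy --- encode parameters in toric domains, use ECH capacity monotonicity and scaling for the lower bound, and explicit inclusions for the upper bound --- matches the paper's framework. But the proposal stops short of the key idea that makes the lower bound work, and your picture of ``each finger controlling its own independent block of capacities, unaffected by the others'' is not the actual mechanism.

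The paper's domain is not a ball with $n$ independent fingers attached. Rather, for parameters $(B_1,\dots,B_N)$ subject to the super-exponential constraint $B_k^2 \le B_{k+1}$, one takes the concave toric domain whose weight sequence is the concatenation of the weight sequences of the very thin ellipsoids $E(B_i^{-i/2}, B_i^{1+i/2})$; the $i$-th piece has volume $\sim B_i$. The central insight (the ``moving volume'' idea, Remark~\ref{rem:idea}) is to exploit the \emph{Weyl law} $c_k \sim 2\sqrt{\mathrm{vol}\cdot k}$: one proves (Lemma~\ref{cap}) that for $k$ in the window $B_M^{M+1}\le k\le B_{M+1}^{M+1}$ one has $c_k(X)\sim\sqrt{k\,B_M}$, i.e.\ in that range $c_k$ sees only the cumulative volume of the first $M$ pieces, which is dominated by $B_M$. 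The super-exponential spacing is precisely what separates these windows and forces the later, much thinner weights to contribute negligibly to $c_k$ inside the $M$-th window (Lemma~\ref{extra}). If $B'_M/B_M$ realizes the maximal ratio one then takes $k=(B'_M)^{M+1}$, checks it lies in the $M$-th window for both domains, and reads off $c_k(X')/c_k(X)\sim\sqrt{B'_M/B_M}$. There is no pigeonhole or counting step; it is a direct asymptotic computation of $c_k$ via the weight formula, and the flexibility of embeddings is irrelevant once the capacity inequality is in hand.

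So the concrete gap is that you have identified neither the Weyl-law/volume-stratification mechanism nor the super-exponential parameter restriction that makes it run; without these, the hope that the $a_i$ can be detected independently remains only a hope. Two smaller points: the parameters are not free but are first produced by a preliminary quasi-isometric embedding $(\mathbb{R}^n,\|\cdot\|_\infty)\hookrightarrow Q_{2n}$ landing in the constrained region $\{B_k^2\le B_{k+1}\}$; and dynamical convexity is not a burden here at all, since four-dimensional monotone toric domains are automatically dynamically convex.
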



\begin{remark}\label{fine} 
\normalfont
Our embeddings of $\RR^N$ into the space of star-shaped domains are in fact also quasi-flats for the fine symplectic Banach-Mazur metric $d_f$, that is, the two metrics are comparable on the image. Indeed, by definition we have $d_{SM} \le d_f$. But in the proof our upper bounds on $d_{SM}$ come from inclusions -- which of course are unknotted -- so also give upper bounds for $d_f$.
\end{remark}

In symplectic geometry, many results are known for convex domains, but the role of convexity, which is not preserved by symplectomorphisms, is still not well-understood.   Our theorem highlights the extent to which convexity is special from the large-scale geometric point of view:

\begin{example}
\normalfont
If one restricts to the space of {\em convex} domains, 
then the space is quasi-isometric to a plane, via the John's ellipsoid construction; the same is true in higher dimensions due to a construction of Guth \cite{guth}.  Thus, our theorem gives a sharp contrast between the convex and star-shaped case, with the former appearing very sparse in the latter.  In fact, we will see in our construction that our quasi-flats can be assumed to have smooth, dynamically convex, boundary, thus the space of dynamically convex domains also has infinite dimension.
\end{example}

\begin{remark}
\label{rem:newproof}
\normalfont
By the discussion in the above example, our theorem therefore recovers an important theorem of Chaidez-Edtmair \cite{ched}, showing that dynamically convex domains need not be convex; dynamical convexity is a symplectic condition implied by convexity, and we refer to \cite{ched} for the definitions and for further context.   In fact, the full force of Theorem~\ref{thm:main} is not needed for this, and we give a shorter proof for the interested reader in Theorem~\ref{thm:ched} below.
Another proof of the Chaidez-Edtmair theorem, via considerations of the metric $d_{SM}$, appears in \cite{dgvz}.  
\end{remark}

\begin{remark}
\label{rem:notsame}
\normalfont

One can attempt to obtain a more refined understanding of the large-scale geometry.   To give one example, in the definition of the coarse Banach-Mazur distance we can replace symplectic embeddings simply by inclusions, obtaining another pseudo-metric $d_I$ which bounds $d_{SM}$ from above. 
Optimistically, one might hope that the identity map gives a quasi-isometry between the space of domains equipped with one pseudo-metric or the other.
It can be checked that 
this is true for the subset of convex toric domains, and the proof of Theorem 1.2 implies that this is also true
on the image of any one of our quasi-isometric embeddings from $\R^n$ into the space of concave toric domains.  However, we show in Proposition~\ref{prop:notsame} that this is not true in general, even if one restricts to concave toric domains.

\end{remark}

\subsection{Exact symplectic manifolds}

Theorem~\ref{thm:main} admits a generalization to an arbitrary exact symplectic $4$-manifold, which we now explain.
 
Let $(M, \lambda)$ be a manifold with a $1$ form such that $\omega = d \lambda$ is symplectic, and such that the flow of the (Liouville) vector field $V_{\lambda}$ exists for all positive time; we call this a {\em complete exact symplectic $4$-manifold}. Here $V_{\lambda}$ is defined by $V_{\lambda} \lrcorner \omega = d \lambda$. We write $\{ \phi^t \}_{t \ge 0}$, for the corresponding $1$ parameter family of embeddings, so ${\phi^t}^* \omega = e^t \omega$. Given an open subset $U \subset M$ and $T \ge 1$ we define $T \cdot U := \phi^{ln T}(U)$.  As an example, we can set $(M, \lambda)$ to be $\mathbb{R}^4$ with the form $\lambda = \frac12 \sum_{i=1}^2 (x_i dy_i - y_i dx_i)$. Then $V = \sum_{i=1}^2 (x_i \partial{x_i} + y_i \partial{y_i})$ is the radial vector field and $T \cdot U$ corresponds to scaling as before.

 
Then the Banach-Mazur distance is defined on open subsets of such $M$ by
\begin{equation}\label{BMexact} 
d^{(M, \lambda)}_{SM}(U,V) = \inf \lbrace \ln(T) | U \to T \cdot V, V \to T \cdot U \rbrace
\end{equation}
where now the arrows denote the existence of a Hamiltonian diffeomorphism of $M$.  (Note that we are allowing for the possibility that the distance between $U$ and $V$ could be infinite.) 
In the case of the standard example of $\mathbb{R}^4$ this coincides with the definition of the coarse symplectic Banach-Mazur distance used earlier in the paper.  


\begin{theorem} \label{exactflats}
Let $(M, \lambda)$ determine a (complete) exact symplectic manifold as above. Then the open subsets of $M$, with respect to the pseudo-metric $d^{(M, \lambda)}_{SM}$, has infinite quasi-flat rank.
\end{theorem}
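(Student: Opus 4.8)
The plan is to fix $N$, transplant the rank-$N$ quasi-flat $\RR^N\to\{\text{star-shaped domains in }\RR^4\}$ furnished by Theorem~\ref{thm:main} into a small Darboux ball inside $M$, and verify the two defining inequalities of a quasi-flat for $d^{(M,\lambda)}_{SM}$ by separating the two halves. The lower bound will transfer essentially verbatim, because the ECH capacities $c_k$ used to prove it are symplectomorphism invariants obeying $c_k(X,r\omega)=r\,c_k(X,\omega)$, while the Liouville scaling on $M$ multiplies $\omega$ by a constant. The upper bound is the delicate half, and the idea there is to transplant the inclusions of Theorem~\ref{thm:main} only \emph{locally} — over short distances in $\RR^N$, where the scaling factor $T$ is close to $1$ — and then chain the local estimates by the triangle inequality, so that the Liouville flow of $M$ is only ever invoked for arbitrarily short times. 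Concretely, let $\{U_{\vec a}\}$ be the domains of Theorem~\ref{thm:main}; rescaling (harmless, since $d_{SM}$ and the $c_k$ rescale correctly), assume $U_{\vec a}\subseteq B^4(\rho_0)$. I would use that: (i) the lower bound reads $d^{\RR^4}_{SM}(U_{\vec a},U_{\vec b})\ge\ln\sup_k\max\big(\tfrac{c_k(U_{\vec a})}{c_k(U_{\vec b})},\tfrac{c_k(U_{\vec b})}{c_k(U_{\vec a})}\big)\ge\tfrac1A|\vec a-\vec b|-B$; (ii) its upper bounds come from inclusions (Remark~\ref{fine}); and (iii) these inclusions are uniformly locally Lipschitz in the radial direction, i.e.\ there are uniform $A',\eta>0$ with the property that $|\vec a-\vec b|<\eta$ implies $U_{\vec a}\subseteq T\cdot U_{\vec b}$ and $U_{\vec b}\subseteq T\cdot U_{\vec a}$ in $\RR^4$ for some $T=T(\vec a,\vec b)$ with $\ln T\le A'|\vec a-\vec b|$, so in particular $T\to 1$ as $\vec b\to\vec a$. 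Property (iii) is transparent for domains of the shape $\{\,|z|^2/2<f_{\vec a}(z/|z|)\,\}$ with $\ln f_{\vec a}$ affine in $\vec a$, and I expect the construction behind Theorem~\ref{thm:main} to reduce to this type.

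Next I would fix $p\in M$, a Darboux chart $\iota\colon B^4(2\rho_0)\hookrightarrow M$ with $\iota(0)=p$, and set $\tilde U_{\vec a}:=\iota(U_{\vec a})$, which has compact closure. For the lower bound: if $\psi$ is a Hamiltonian diffeomorphism of $M$ with $\psi(\tilde U_{\vec a})\subseteq T\cdot\tilde U_{\vec b}=\phi^{\ln T}(\tilde U_{\vec b})$, then since $(\phi^{\ln T})^*\omega=T\omega$ we have $c_k(T\cdot\tilde U_{\vec b})=T\,c_k(\tilde U_{\vec b})$, while monotonicity of $c_k$ under the symplectic embedding $\psi|_{\tilde U_{\vec a}}$ gives $c_k(\tilde U_{\vec a})\le T\,c_k(\tilde U_{\vec b})$; symplectomorphism invariance gives $c_k(\tilde U_{\vec a})=c_k(U_{\vec a})$ and $c_k(\tilde U_{\vec b})=c_k(U_{\vec b})$, and with the reverse embedding this forces $\ln T\ge\tfrac1A|\vec a-\vec b|-B$, hence $d^{(M,\lambda)}_{SM}(\tilde U_{\vec a},\tilde U_{\vec b})\ge\tfrac1A|\vec a-\vec b|-B$.

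For the upper bound I would first treat $|\vec a-\vec b|<\eta$, with $T$ as in (iii). The embedding $\iota_T$ characterised by $\iota_T(T\cdot X)=\phi^{\ln T}(\iota(X))$ for $X\subseteq B^4(2\rho_0)$ is symplectic and defined on $T\cdot B^4(2\rho_0)\supseteq B^4(2\rho_0)$, and $\iota_T\circ\iota^{-1}$ is a symplectic self-embedding of $\iota(B^4(2\rho_0))$ which, for $T$ near $1$, is $C^\infty$-close to, and isotopic through symplectic embeddings to, the inclusion; by the Hamiltonian isotopy extension theorem (the flux obstruction vanishes as a ball is simply connected) it agrees on $\iota(\overline{B^4(\rho_0)})$ with a Hamiltonian diffeomorphism $\psi$ of $M$. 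Then $\psi(\tilde U_{\vec a})=\iota_T(U_{\vec a})\subseteq\iota_T(T\cdot U_{\vec b})=T\cdot\tilde U_{\vec b}$, and symmetrically with the same $T$, so $d^{(M,\lambda)}_{SM}(\tilde U_{\vec a},\tilde U_{\vec b})\le A'|\vec a-\vec b|$ when $|\vec a-\vec b|<\eta$. For general $\vec a,\vec b$, I would subdivide the segment $[\vec a,\vec b]$ into $\lceil|\vec a-\vec b|/\eta\rceil$ short sub-segments and apply the triangle inequality for $d^{(M,\lambda)}_{SM}$ — which holds because conjugating a Hamiltonian diffeomorphism by the Liouville flow $\phi^s$ is again Hamiltonian (its generating Hamiltonian is merely rescaled and pulled back) — to conclude $d^{(M,\lambda)}_{SM}(\tilde U_{\vec a},\tilde U_{\vec b})\le A'|\vec a-\vec b|$. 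Together with the lower bound, $\vec a\mapsto\tilde U_{\vec a}$ is a rank-$N$ quasi-flat, and $N$ is arbitrary.

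The hard part is precisely why one must argue locally in the upper bound: a naive transplantation would need $U_{\vec a}\subseteq T\cdot U_{\vec b}$ at the global scale $T=e^{\Theta(|\vec a-\vec b|)}$, and for such large times $\phi^{\ln T}$ drags the Darboux ball far away, with long-time dynamics that are uncontrolled — possibly recurrent — for a general complete exact $M$; forcing $T$ near $1$ and chaining by the triangle inequality avoids this entirely. The two points that then need genuine care are verifying property (iii), with uniform constants, directly from the construction behind Theorem~\ref{thm:main}, and the Hamiltonian isotopy extension step, which is standard but should be spelled out.
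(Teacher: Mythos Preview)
There is a real gap in the sentence ``rescaling (harmless \ldots), assume $U_{\vec a}\subseteq B^4(\rho_0)$.'' The domains $X_v$ built in \S\ref{sec:build} are not uniformly bounded: the $x$-intercept of the moment region of $\hat X_v$ is $\sum_i B_i^{\,i/2+1}$, which diverges as $v$ ranges over the parameter set, so no single rescaling places them all in a fixed ball. A $v$-dependent rescaling $U'_v:=C(v)^{-1}X_v$ with $\ln C$ Lipschitz in $\|\cdot,\cdot\|$ would salvage both the quasi-flat and your property~(iii), but that is genuine work you have not done; you would also need to sharpen the smoothing in Lemma~\ref{lem:upperbound}, whose stated bound carries an additive $+1$ that is harmless for a global quasi-isometry but incompatible with your local requirement $\ln T\to 0$ as $\|v,w\|\to 0$. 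Finally, your expectation that the defining functions satisfy ``$\ln f_{\vec a}$ affine in $\vec a$'' does not match the actual construction.

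By contrast, the paper neither confines the domains to a fixed Darboux ball nor chains by the triangle inequality. It sets $Y^v:=T(v)\cdot\psi\bigl(\tfrac{1}{T(v)}X^v\bigr)$ for a fixed Darboux embedding $\psi\colon B(\epsilon)\hookrightarrow M$, with $T(v)$ chosen (large, and $v$-dependent) so that $\tfrac{1}{T(v)}X^v\subset B(\epsilon)$. Given the $\RR^4$ inclusion $X^v\subset M\cdot X^w$, the paper then writes down a single explicit symplectic isotopy $t\mapsto f(t)\cdot\psi\bigl(\tfrac{1}{f(t)}X^v\bigr)$, $t\in[0,2]$, with $f$ interpolating from $T(v)$ to $M\,T(w)$, and checks only that $\tfrac{1}{f(t)}X^v\subset B(\epsilon)$ throughout; this carries $Y^v$ into $M\cdot Y^w$ in one shot. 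Your worry about ``uncontrolled'' long-time Liouville dynamics is therefore misplaced: completeness is assumed, and the argument never needs to know where $\phi^{\ln T}$ sends the ball in $M$, only that the shrunk copies remain in $B(\epsilon)$ before the flow is applied.
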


\subsection{Stojisavljevi\'{c} and Zhang's conjecture}

Although Theorem~\ref{exactflats} applies to any complete exact symplectic manifold, it is interesting to impose further conditions on the open subsets under consideration.  A notable example of this is the following  conjecture of  Stojisavljevi\'{c} and Zhang; we can resolve it with our methods.  

To elaborate, call an open domain of an exact symplectic manifold {\em star-shaped} if its boundary is transverse to  
$V_{\lambda}$.
In the case $T^* S^2$, the star-shaped sets are the same as 
subsets 
which are starshaped about the origin in each fiber. Given a subset $U \subset T^* S^2$ and $T>0$ it is natural to define $T \cdot U = TU$ where multiplication is defined fiberwise. We note that this fiberwise multiplication satisfies $T^* \lambda = T\lambda$ where $\lambda = pdq$ is the canonical Liouville form. Then we define a Banach-Mazur distance on starshaped subsets by
 \[ d^{T^* S^2}_{SM}(U,V) = \inf \lbrace \ln(T) | U \to T \cdot V, V \to T \cdot U \rbrace. \]
In other words, this is just the restriction of \eqref{BMexact} to starshaped domains, in the case when $(M, \lambda)$ is $T^* S^2$ with its Liouville form.

Here is our result:

\begin{theorem}\cite[Conj. 1.13]{sz} \label{flatsS2}
The starshaped open subsets of $T^* S^2$, with respect to the pseudo-metric $d^{T^* S^2}_{SM}$, has infinite quasi-flat rank.
\end{theorem}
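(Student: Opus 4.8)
The plan is to specialise the construction behind Theorem~\ref{exactflats} to $(M,\lambda) = (T^* S^2, p\,dq)$ and to arrange that every domain it produces is starshaped, i.e.\ has boundary transverse to $V_{\lambda}$; since this is precisely the condition of being starshaped about the origin in each cotangent fibre, it is enough to build the quasi-flats inside the subspace of fibrewise starshaped domains. Recall that such a $U$ has boundary a graph over the unit cotangent bundle $S^* S^2 \cong \RP^3$, so $\lambda|_{\partial U}$ is a contact form for the standard (tight) contact structure on $\RP^3$, and $\ECH$ capacities $c_k(U)$ are defined and monotone under symplectic embeddings of $U$; as in the euclidean setting, these are what will give the lower bounds on $d^{T^* S^2}_{SM}$. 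The ``variant'' we need lives here.

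First I would recall how the domains $U_{\xx}$, $\xx\in\RR^N$, of Theorem~\ref{exactflats} are assembled: one fixes a large reference domain and alters it, inside a region on which $\lambda$ is standard, by inserting members of a family of model domains in $\mathbb{R}^4$ --- which, as noted in Remark~\ref{rem:notsame}, may be taken to be concave toric domains with explicitly controlled $\ECHspectrum$ --- the lower bounds on $d_{SM}$ coming from this ECH data and the upper bounds from honest inclusions $U_{\xx}\subset T\cdot U_{\yy}$. For $T^* S^2$ I would take the reference domain to be a large disk bundle $D_R$, with $\partial D_R = S^* S^2 \cong \RP^3$, and perform the modification in the symplectization collar of $\partial D_R$ rather than in a Darboux ball: a concave toric domain is invariant under the $T^2$-action, in particular under the deck transformation $(z_1,z_2)\mapsto(-z_1,-z_2)$ of the double cover $S^3\to\RP^3$, so (away from a fixed small neighbourhood of the zero section, which plays the role of the quotient singularity) it descends to a fibrewise starshaped modification of $D_R$. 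Concretely $U_{\xx}$ is obtained from $D_R$ by replacing the radial profile $r\equiv R$ with $r = f_{\xx}$, where $f_{\xx}$ on $\RP^3$ is the $\mathbb{Z}_2$-quotient of the profile used in the euclidean construction of Theorem~\ref{thm:main}. Starshapedness then holds by construction, and the upper-bound inclusions go through verbatim, since they only compare the profiles $f_{\xx}, f_{\yy}$ in the collar and an inclusion of starshaped domains is still an inclusion.

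The main obstacle is the lower bound: one must produce, for starshaped domains in $T^* S^2$, an embedding obstruction reproducing up to bounded error the linear growth in $\|\xx-\yy\|_\infty$ of the euclidean model. The natural candidate is again an $\ECH$ capacity. On the complement of the zero section, $T^* S^2$ is symplectomorphic to the symplectization $((0,\infty)\times\RP^3,\, d(r\alpha))$ of the round contact form, whose double cover is $\mathbb{C}^2\setminus\{0\}$; thus $U_{\xx}$ lifts to a $\mathbb{Z}_2$-invariant starshaped domain $\widetilde U_{\xx}\subset\mathbb{C}^2$ agreeing with a model toric domain outside a small ball, and the ECH spectrum of $U_{\xx}$ is controlled by the $\mathbb{Z}_2$-invariant part of that of $\widetilde U_{\xx}$ --- equivalently, it can be read off directly from the Reeb dynamics of $\lambda|_{\partial U_{\xx}}$ on $\RP^3$ via the combinatorial formula for lens-space boundaries. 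The point to check is that passing to this invariant part retains enough of the combinatorics of the profile to separate the parameters at the required rate; one should also confirm that the relevant $c_k(U_{\xx})$ are genuine obstructions to a Hamiltonian embedding in the non-exact manifold $T^* S^2$, and not merely to an abstract symplectic one, but here monotonicity of $\ECH$ capacities together with the identification $\phi(U_{\xx})\cong U_{\xx}$ for a Hamiltonian $\phi$ suffice. Granting this, the maps $\xx\mapsto U_{\xx}$ are quasi-isometric embeddings $\RR^N\hookrightarrow(\text{starshaped domains in }T^* S^2,\ d^{T^* S^2}_{SM})$ for all $N$ --- no quasi-surjectivity is required of a quasi-flat --- which is exactly the assertion of Theorem~\ref{flatsS2}.
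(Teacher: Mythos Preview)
Your construction of the starshaped domains $Y_v\subset T^*S^2$ by descending the toric domains $X_v\subset\CC^2$ along the symplectic double cover $\CC^2\setminus\{0\}\to T^*S^2\setminus\OO$ is exactly what the paper does, and your upper bound via inclusions is fine (and matches the paper's scaling argument). The divergence, and the gap, is in the lower bound. You propose to compute $\ECH$ capacities of the $Y_v$ intrinsically, asserting that the $\ECH$ spectrum of $Y_v$ is ``controlled by the $\ZZ_2$-invariant part'' of that of $X_v$, or alternatively by lens-space combinatorics on $\RP^3$; you then flag as ``the point to check'' that this invariant part retains enough of the profile to separate parameters at the required rate. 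This is not a small point. There is no general statement relating the $\ECH$ of a domain to that of a finite quotient in the way you suggest, and reproducing the key estimate of Lemma~\ref{cap} (that $c_k$ in a prescribed $k$-window sees only the volume of the first $M$ ellipsoidal pieces) directly on $\RP^3$ would require a parallel combinatorial analysis that you do not carry out. (A side remark: $T^*S^2$ \emph{is} exact, so your worry about obstructions to Hamiltonian versus symplectic embeddings in a ``non-exact'' manifold is misplaced.)

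The paper bypasses all of this with a geometric input you are missing: the nearby Arnold conjecture for Lagrangian $2$-spheres. Given any symplectic embedding $f\colon Y_v\hookrightarrow Y_w$, one uses Lagrangian unknottedness in $T^*S^2$ (see \cite{hi,hi2,blw}) to Hamiltonian-isotope $f(\OO)$ back to $\OO$ inside $Y_w$, then normalises $f$ near $\OO$ via Moser so that it is the identity or the differential of the antipodal map there. Now $f|_{Y_v\setminus\OO}$ lifts through the double cover to a symplectic embedding $g\colon X_v\setminus\{0\}\to X_w\setminus\{0\}$ that extends over the origin, giving $X_v\hookrightarrow X_w$ in $\CC^2$. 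This yields $d^{T^*S^2}_{SM}(Y_v,Y_w)\ge d_{SM}(X_v,X_w)$ directly, with no need to compute a single $\ECH$ capacity on $T^*S^2$; the already-established lower bound from Proposition~\ref{bm} then finishes the proof. The Lagrangian unknottedness step is the essential idea your proposal lacks.
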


\begin{remark}
\normalfont
The above theorem settles Stojisavljevi\'{c} and Zhang's Conjecture 1.13 from \cite{sz}.   Stojisavljevi\'{c} and Zhang prove the analogous result for $T^*S$ when $S$ is a closed orientable surface with positive genus in \cite{sz}, and they show that in the genus zero case, there is a quasi-isometric embedding of a ray.   This is perhaps reminiscent of the situation with respect to Hofer's metric on surfaces, where new techniques
were used in the genus zero case compared to the case of higher genus; see \cite{jems, ps}.  The main concern in Stojisavljevi\'{c} and Zhang's paper was the fine Banach-Mazur distance, but our results on the coarse distance also imply quasi-flats for the fine, see Remark \ref{fine}.
\end{remark}

\subsection{The contact Banach-Mazur distance}

There is an analogue of the symplectic Banach-Mazur distance in contact geometry, more precisely for the space of contact forms on a fixed contact structure, called the {\em contact Banach-Mazur distance}, denoted $d_{CBM}$; we will not state the definition here for brevity, referring the reader instead to \cite[Defn. 4]{rosenzhang}.  It would be interesting to prove an analogue of Theorem~\ref{exactflats} in this setting, i.e. for contact structures on a closed three-manifold.  Since the quasi-flats we construct in Theorem \ref{thm:main} are described by starshaped domains whose boundary contact forms give the standard contact structure on $S^3$, we have the following corollary.

\begin{corollary}
The space of contact forms giving the standard contact structure on $S^3$ has infinite quasi-flat rank with respect to $d_{CBM}$.
\end{corollary}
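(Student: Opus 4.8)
The plan is to transfer the quasi-flats of Theorem~\ref{thm:main} across the standard dictionary between star-shaped domains in $(\mathbb{R}^4,\lambda)$ and contact forms on $(S^3,\xi_{std})$, where $\xi_{std}=\ker(\lambda|_{S^3})$. To a star-shaped $U\subset\mathbb{R}^4$ one associates the contact form $\alpha_U:=r_U^*(\lambda|_{\partial U})$ on $S^3$, with $r_U\colon S^3\to\partial U$ the radial diffeomorphism; conversely $h\,\lambda|_{S^3}$ with $h>0$ equals $\alpha_U$ for the $U$ with radial profile $\propto\sqrt{h}$, and these operations are mutually inverse up to the evident identifications. Under the scaling conventions in force, $T\cdot U$ corresponds to $T\alpha_U$, and an inclusion $U\subset T\cdot V$ of star-shaped domains is exactly the pointwise domination $\alpha_U\preceq T\alpha_V$ (both forms written as multiples of $\lambda|_{S^3}$). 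By the sentence preceding the corollary, Theorem~\ref{thm:main} provides, for each $N$, a quasi-isometric embedding $F\colon(\mathbb{R}^N,\|\cdot\|_\infty)\to(\{\text{star-shaped domains}\},d_{SM})$ whose values $U$ all have $\ker\alpha_U=\xi_{std}$; I claim $\widetilde F\colon x\mapsto\alpha_{F(x)}$ is then a quasi-isometric embedding into the space of contact forms defining $\xi_{std}$, equipped with $d_{CBM}$.

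The upper bound is the easy half. By Remark~\ref{fine} the inequality $d_{SM}(F(x),F(y))\le A\|x-y\|_\infty+B$ from Theorem~\ref{thm:main} is witnessed by (mutual) inclusions of star-shaped domains; such an inclusion $F(x)\subset T\cdot F(y)$ yields $\alpha_{F(x)}\preceq T\alpha_{F(y)}$, and taking the identity as the ambient contactomorphism bounds $d_{CBM}(\alpha_{F(x)},\alpha_{F(y)})$ by $\ln T$, up to a convention-dependent constant relating the two normalizations. Hence $d_{CBM}(\widetilde F(x),\widetilde F(y))\le A'\|x-y\|_\infty+B'$.

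For the lower bound it suffices, since $F$ is a quasi-flat, to bound $d_{SM}(F(x),F(y))$ above by a universal multiple of $d_{CBM}(\alpha_{F(x)},\alpha_{F(y)})$ plus a constant. There are two natural routes. One is the known comparison between the two distances: a contactomorphism $\psi$ of $(S^3,\xi_{std})$ realizing a pinching $\alpha_1/T\preceq\psi^*\alpha_2\preceq T\alpha_1$ lifts over the symplectization $(0,\infty)\times S^3\cong\mathbb{R}^4\setminus\{0\}$ to the exact symplectomorphism $(y,r)\mapsto(\psi(y),r/\sqrt{g(y)})$, where $\psi^*(\lambda|_{S^3})=g\,\lambda|_{S^3}$; the pinching is precisely the statement that this map carries $U_{\alpha_1}$ into $(T+\epsilon)\cdot U_{\alpha_2}$, and after handling a neighborhood of the origin --- using that all the relevant domains contain a common round ball --- one obtains an honest symplectic embedding, so $d_{SM}(U_{\alpha_1},U_{\alpha_2})\le d_{CBM}(\alpha_1,\alpha_2)+o(1)$. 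The other route, which sidesteps the extension issue, is to reuse the obstruction behind the lower bound in Theorem~\ref{thm:main}: that bound is read from ECH-type capacities of the domains, equivalently from the ECH spectrum of the boundary contact forms, and the ECH spectrum is a strict-contactomorphism invariant that scales linearly, so it obstructs $d_{CBM}$ just as it obstructs $d_{SM}$ (cf. \cite{rosenzhang, sz}). Either way one obtains the lower bound, and, $N$ being arbitrary, this gives the corollary.

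The main obstacle is exactly this lower bound: certifying that a contactomorphism cannot compress the domains beyond what $d_{CBM}$ permits, i.e. that the $d_{SM}$-obstruction of Theorem~\ref{thm:main} persists as a $d_{CBM}$-obstruction. The dictionary and the upper bound are essentially formal; the content is to carry whichever quantitative lower bound drives the proof of Theorem~\ref{thm:main} into the contact setting --- either via the symplectization lift (where the only real care is near the origin and in tracking uniform constants) or by verifying that the relevant ECH-spectral quantity is a contactomorphism invariant with the correct scaling.
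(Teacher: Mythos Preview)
Your proposal is correct and follows essentially the same route as the paper: transfer the quasi-flats of Theorem~\ref{thm:main} to contact forms on $S^3$ via the boundary map, get the upper bound from the fact (Remark~\ref{fine}) that the $d_{SM}$ upper bounds come from inclusions (hence pointwise domination of forms), and get the lower bound from the comparison $d_{SM}\le d_{CBM}$. The paper simply cites \cite[Lemma~8]{rosenzhang} for that last inequality, which is exactly what your route~(a) sketches; your route~(b) is a minor variant of the same idea.
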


Indeed, by \cite{rosenzhang}, Lemma 8, the metric $d_{CBM}$ on the boundary forms is bounded below by the metric $d_{SM}$ applied to our starshaped domains. However our upper bounds on $d_{SM}$ are derived from explicit symplectic embeddings, namely inclusions, which imply the same upper bounds for $d_{CBM}$. In other words our quasi-flats mapping $\RR^N$ into the space of starshaped domains can be composed with the map taking starshaped domains to contact forms on $S^3$, then the resulting composition is a quasi-flat in the space of contact forms.  For previous work on the contact Banach-Mazur distance, we refer the reader to \cite{rosenzhang, melistas}.

\subsection*{Acknowledgements}

Key discussions about this project occurred while the authors were in residence in Cortona, Italy to give the 2023 mini-course ``Symplectic geometry, capacities and embeddings."   We thank the Scuola Normale Superiore di Pisa, the Consorzio Interuniversitario per l’Alta Formazione in Matematica (CIAFM) and the Italian Ministry of Education University and Research
(MIUR) for their support.  We also thank Oliver Edtmair, Leonid Polterovich, Mike Usher, Luya Wang and Jun Zhang for very helpful comments. Finally DCG thanks the National Science Foundation for their support under agreement DMS-2227372, and RH thanks the Simons Foundation for their support under grant no. 633715.

\section{Proofs}

\subsection{ECH capacities}
\label{sec:ECH}

We begin by reviewing the main tool which will power our arguments.

Our obstructions come from a family of numerical invariants
\[ 0 := c_0(X,\omega) \le c_1(X_\omega) \le \ldots \le \infty,\]
defined for four-dimensional symplectic domains, called {\em ECH capacities}.  The ECH capacities were defined in \cite{h1}.  They are measurements of symplectic size: the key facts that we need to know about them here are the {\em Monotonicity Axiom}, which states that
\[ c_k(X_1,\omega_1) \le c_k(X_2,\omega_2),\]
for each $k$ whenever there is a symplectic embedding of $(X_1,\omega_1)$ into $(X_2,\omega_2)$, and the Scaling Axiom, which states that
\[ c_k(X, c \cdot \omega) = c \cdot (X,\omega),\]
for positive real numbers $c$.  Another property of ECH capacities which is at the heart of our construction is the Weyl Law, which states that
\[  \lim_{k \to \infty} \frac {c_k(X,\omega)^2}{k} = 4 \vol(X,\omega);\]
this was proved for domains in $\mathbb{R}^4$, which is the case relevant here, in \cite{h1}, and for more general domains in \cite{cghr}.

We will also need to know how to calculate ECH capacities for the following family of domains.  Recall that a {\em toric domain} is a region
\[ X_{\Omega} = \lbrace (z_1,z_2) | \pi (|z_1|^2, |z_2|^2) \in \Omega \rbrace \subset \CC^2\]
associated to a subset $\Omega \subset \mathbb{R}^2$ in the first quadrant.  A toric domain is called {\em concave} if $\Omega$ is bounded by the axes and a convex function $f: [0,A]\to [0,B]$ with $f(0) = B$ and $f(A) = 0$.   Any concave toric domain has a canonical ball-packing called its {\em weight expansion}: this is the ``greedy" ball-packing via inclusion in the moment image, and we refer the reader to \cite{ccghr} for its definition.  The size of these balls are called the {\em weights} of the concave toric domain.  As proved in \cite{ccghr}, the weights $(a_1, \ldots,)$ of a concave toric domain $X$ determine its ECH capacities via the rule
\[ c_k(X) = max_{k_1 + \ldots + k_m = k} c_{k_i}(B(a_i)).\]
We also recall here that the sequence of ECH capacities $c_{ECH} = (c_1,c_2,\ldots)$ of the ball are given by the explicit formula:
\[ c_{ECH}(B(a)) = (0,a,a,2a,2a,2a,\ldots).\] 
For more detail about the above, we refer the reader to \cite{ccghr}.

ECH capacities are defined using a Floer theory for contact three-manifolds called embedded contact homology.  For our purposes here, one could alternatively use the ``elementary ECH capacities" defined in \cite{h2}; these can be defined without requiring Floer theory.  Since our arguments use only some formal properties shared by both invariants, either will suffice.

\subsection{A dynamically convex domain that is not convex}

Before proceeding with our argument, we now give the promised new proof of Chaidez-Edtmair's theorem; see the discussion in Remark~\ref{rem:newproof}.  
Our proof is also a ``warm-up" for the general case, in that it introduces the key idea of ``moving volume" to the latter end of the ECH capacities; for more about this, see Remark~\ref{rem:idea}.



\begin{theorem}\cite{ched} There exists a smooth and dynamically convex domain that is not convex
\label{thm:ched}
\end{theorem}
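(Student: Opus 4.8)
The plan is to produce a smooth star-shaped domain $X\subset\RR^4$ that is dynamically convex but whose sequence of ECH capacities cannot occur, up to bounded $d_{SM}$-distortion, for any convex domain; since ECH capacities are symplectomorphism invariants this shows $X$ is not symplectomorphic to any convex domain, which is the content of the theorem (``convex'' meaning ``symplectomorphic to a convex domain'', dynamical convexity itself being a symplectomorphism-invariant condition on $\partial X$). For the obstruction to convexity I would use John's ellipsoid theorem: any convex domain $K\subset\RR^4$ satisfies $E\subseteq K\subseteq C\cdot E$ for a dimensional constant $C$ and its John ellipsoid $E$, which after a (Hamiltonian) translation we center at the origin, and every ellipsoid in $(\RR^4,\omega_0)$ is linearly symplectomorphic to a symplectic ellipsoid $E(a,b)$; so if $X$ were symplectomorphic to a convex domain, the Monotonicity and Scaling Axioms would force
\[ C^{-1}c_k(X)\;\le\;c_k(E(a,b))\;\le\;C\,c_k(X)\qquad\text{for all }k\]
for some fixed $a\le b$. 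The idea is to build $X$ so that no $(a,b)$ can satisfy this: I want the first (many) ECH capacities of $X$ to agree with those of a small ball $B(1)$, forcing $a\asymp 1$, while $X$ carries an enormous volume $V$ concealed in a long thin ``spike'', so that by the Weyl Law $ab\asymp V$ and hence $b\asymp V$ is enormous; then $E(a,b)$ remains in its linear regime $c_k(E(a,b))=ka$ over a very long range of $k$, and on that range $c_k(E(a,b))\asymp k$ outgrows $C\,c_k(X)$, which is still essentially $\sqrt{2k}$ (ball-like) --- the contradiction. This is the ``moving volume to the latter end of the ECH capacities'' mechanism: the volume is invisible in the early capacities but governs the tail through the Weyl Law.

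Concretely I would take $X$ to be a smoothing of the concave toric domain $X_\Omega$ whose weight expansion is a single weight $1$ together with $m$ copies of a tiny weight $\delta$, with $m\delta^2\asymp V$ and $V$ as large as desired; equivalently $\Omega$ is the size-$1$ triangle with a spike of width $\asymp\delta$ and length $\asymp V/\delta$ attached along an axis, and $X$ is $B(1)$ with a long thin ellipsoidal appendage joined along a convex neck. Then $\vol(X)\asymp V$, and from the weight-expansion formula and $c_{ECH}(B(a))=(0,a,a,2a,2a,2a,\ldots)$ one computes $c_k(X)\asymp\sqrt{2k}$ for $k\lesssim 1/\delta^2$, then $c_k(X)\asymp k\delta$ for $1/\delta^2\lesssim k\lesssim m$, and finally $c_k(X)\sim 2\sqrt{kV}$ for $k\gtrsim m$ by the Weyl Law. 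Feeding this into the sandwich: $k=1$ gives $a\asymp 1$; the Weyl Law gives $ab\asymp V$, hence $b\asymp V$; and on the range $1/\delta^2\ll k\ll b/a\asymp V$ --- nonempty once $V\gg 1/\delta^2$, and well below $m$ --- we have $c_k(E(a,b))=ka\asymp k$ while $c_k(X)\asymp k\delta$, so $c_k(E(a,b))/c_k(X)\asymp 1/\delta$, which exceeds $C$ for $\delta$ small. Hence $X$ is not symplectomorphic to a convex domain.

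What remains --- and this is the principal obstacle --- is to check that a suitable smoothing of $X$ is dynamically convex, i.e.\ that every closed Reeb orbit on $\partial X$ has Conley-Zehnder index $\ge 3$. This is not automatic: $X$ is a concave, not convex, subset, and toric domains carry degenerate Reeb flows, so one must argue directly with the index formulas for Reeb orbits of (a smoothing of) a toric domain. The short orbits are concentrated near the round-ball part, where they behave like Hopf orbits and have index $3$; the orbits sitting over rational-slope points of $\partial\Omega$ have index controlled by those slopes, and in our $\Omega$ --- ball-like at one end and extremely shallow along the spike --- these come out $\ge 3$; the long simple orbit running down the spike is highly rotating, with large index. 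One then picks the smoothing (and the convex neck joining ball to spike) so as not to create any new low-index orbit. Carrying out these Conley-Zehnder computations carefully, rather than the capacity estimate, is the delicate part; for this warm-up one could substitute any preferred convexity obstruction for the ellipsoid comparison, but the argument above is the one that generalizes to Theorem~\ref{thm:main}.
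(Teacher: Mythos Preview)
Your obstruction argument is correct and is essentially the paper's: the same domain (a ball with a long thin concave spike), John's ellipsoid, and a mismatch of ECH capacities between $X$ and the comparison ellipsoid.  The paper's version of the capacity estimate is slightly leaner than yours: rather than tracking the intermediate regime where $c_k(X)\asymp k\delta$, one simply notes that $a\le C^2V$ is bounded independently of $\eps$, so there is a $k_0$ depending only on $V$ at which the ellipsoid has already reached its Weyl asymptotics $c_{k_0}(E)\gtrsim\sqrt{Vk_0}$, while for $\eps$ small enough (chosen \emph{after} $k_0$) one still has $c_{k_0}(X)\le c_{k_0}(B(1))+1$.  This avoids the three-regime analysis, though your version is fine and is indeed closer in spirit to what is needed for Theorem~\ref{thm:main}.

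The genuine gap is in what you call ``the principal obstacle''.  You propose to verify dynamical convexity by hand via Conley--Zehnder computations on a carefully chosen smoothing; this is unnecessary and, as stated, incomplete.  Your domain is a concave toric domain whose upper boundary has slope everywhere in $[-1,0]$ (the weight-$1$ triangle contributes slope $-1$, the spike contributes slopes approaching $0$), hence all outward normals to $\partial\Omega$ lie in the positive cone.  In other words $X$ is a \emph{monotone} toric domain, and by \cite[Prop.~1.8]{ghr} every four-dimensional monotone toric domain is dynamically convex.  This is exactly how the paper dispatches the issue, in one sentence.  So the ``delicate part'' you flag is in fact the easy part; once you invoke \cite{ghr}, your proof is complete and matches the paper's.
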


\begin{proof}Consider a smooth concave toric domain $X$, whose moment polytope has the following properties: it starts at $(0,1)$ with a line of slope $-1$, connecting to $(1-\eps, \eps)$, and then has volume $V$.  We claim that for appropriate choices of $V, \eps > 0$, the domain $X$ can not be convex; since $X$ is a four-dimensional monotone toric domain, it is dynamically convex, by \cite[Prop. 1.8]{ghr}.

Assume that $X$ is convex.  Then, by John's theorem, there is some universal constant $C$ (i.e. independent of everything) and some ellipsoid $\lambda E(1,a)$ with $a \ge 1$ such that 
\begin{equation}
\label{eqn:inclusions}
(1/C) \lambda E(1,a) \subset X \subset \lambda E(1,a). 
\end{equation}  
From above, we know that
\begin{equation}
\label{eqn:inequalities}
\lambda \ge 1, \quad \quad  (1/C^2) \lambda^2 a \le V, \quad \quad V \le \lambda^2 a,
\end{equation}
where in the first inequality we have used the fact that the Gromov width of $X$ is $1$.

It follows from \eqref{eqn:inequalities} that $a \le C^2 V / \lambda^2 \le C^2 V$.  
Thus, $a$ has an $\eps$-independent upper bound, depending only on $V$.   It follows that, there exists $k_0$, depending only on $V$ and not on $\eps$, such that  
\[ c_{k_0}((1/C) \cdot \lambda E(1,a)) \ge  (\lambda/C) \sqrt{a} \sqrt{k_0} \ge  (1/C) \sqrt{V} \sqrt{k_0},\]
where in the first inequality we have used the Weyl Law for ellipsoids, and in the last inequality we have used \eqref{eqn:inequalities}.  
On the other hand, if $\eps$ is sufficiently small with respect to $k_0$, $c_{k_0}(X) \le c_{k_0}(B(1)) + 1$, since all the weights in $X$ that are not $1$ are bounded by $\eps$.  This is a contradiction for suitable choice of $V$ in view of the first embedding in \eqref{eqn:inclusions} by the Weyl Law\footnote{We note that the Weyl Law in the cases relevant here, i.e. the case of an ellipsoid, follows from direct computation once one knows the ECH capacities of an ellipsoid, as in \cite[Exer. 1.4]{hutchings}.}, applied to $B(1)$.
\proofend

\end{proof}

\subsection{Change of variables}

We now proceed with our arguments.  Let $Q_n: = \mathbb{R}^n_{> 0}$ denote the space of vectors with positive entries.  This has a metric 
\[ || x , y || = max(| ln (x_i/y_i) |). \]
The purpose of this section is to prove the following lemma, which describes some very useful quasi-flats in $Q_{2n}$. 

\begin{lemma}
There is a quasi-isometric embedding 
\[ (\mathbb{R}^{n},||\cdot||_{\infty}) \to \lbrace (B_1, \ldots, B_{2n}) | \hspace{1 mm} 8^2 \le B_k^2 \le B_{k+1} \hspace{1 mm} \forall k \rbrace \subset (Q_{2n},||\cdot|| ). \]
\end{lemma}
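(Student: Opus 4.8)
\emph{The plan} is to pass to logarithmic coordinates and then assemble the embedding out of two elementary pieces: a \emph{folding} map that turns each signed coordinate of $\RR^n$ into a pair of nonnegative ones, and a \emph{shear} by powers of $2$ that installs the required super-exponential growth while automatically preserving it.

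\emph{Step 1 (logarithmic coordinates).} Writing $y_k = \ln B_k$, the coordinatewise exponential $(y_1,\dots,y_{2n}) \mapsto (e^{y_1},\dots,e^{y_{2n}})$ is an isometry from $(\RR^{2n},\|\cdot\|_\infty)$ onto $(Q_{2n},\|\cdot\|)$, since $\ln(e^{y_k}/e^{y'_k}) = y_k - y'_k$. Under it, the constraint $8^2 \le B_k^2 \le B_{k+1}$ becomes $y_k \ge \ln 8$ and $2y_k \le y_{k+1}$. So it suffices to produce a quasi-isometric embedding $(\RR^n,\|\cdot\|_\infty) \to \RR^{2n}$ whose image lies in $R := \{\, y\in\RR^{2n} : y_k \ge \ln 8 \text{ and } 2y_k \le y_{k+1}\ \forall k \,\}$, and then postcompose with the exponential.

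\emph{Step 2 (folding).} Define $\iota\colon \RR^n \to \RR^{2n}_{\ge 0}$ by $\iota(x)_{2j-1} = \max(x_j,0)$ and $\iota(x)_{2j} = \max(-x_j,0)$. Each coordinate of $\iota$ is $1$-Lipschitz in the single variable $x_j$ it depends on, so $\|\iota(x)-\iota(x')\|_\infty \le \|x-x'\|_\infty$; and a three-case check at the index $j$ maximizing $|x_j - x'_j|$ (both $\ge 0$, both $\le 0$, or opposite signs) gives $\|\iota(x)-\iota(x')\|_\infty \ge \tfrac12\|x-x'\|_\infty$. Thus $\iota$ is bi-Lipschitz onto its image with constants $(\tfrac12,1)$ and additive constant $0$.

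\emph{Step 3 (shear) and conclusion.} Fix constants $c_1 < \dots < c_{2n}$ with $c_1 \ge \ln 8$ and $c_{k+1} \ge 2c_k$ for all $k$ (e.g.\ $c_k = 3\cdot 2^{k-1}$), and define the affine map $\Psi\colon \RR^{2n}_{\ge 0} \to \RR^{2n}$ by $\Psi(p)_k = c_k + \sum_{\ell=1}^{k} 2^{k-\ell} p_\ell$. For $p \ge 0$ one has $\Psi(p)_k \ge c_k \ge \ln 8$, and the key telescoping identity $\Psi(p)_{k+1} - 2\Psi(p)_k = (c_{k+1}-2c_k) + p_{k+1} \ge 0$ shows $\Psi(\RR^{2n}_{\ge 0}) \subset R$. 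Moreover $|\Psi(p)_k - \Psi(p')_k| \le \sum_{\ell=1}^k 2^{k-\ell}|p_\ell - p'_\ell| \le 2^{2n}\|p-p'\|_\infty$, while inverting ($p_1 = \Psi(p)_1 - c_1$ and $p_k = \Psi(p)_k - 2\Psi(p)_{k-1} - (c_k - 2c_{k-1})$ for $k \ge 2$) gives $\|p - p'\|_\infty \le 3\|\Psi(p)-\Psi(p')\|_\infty$. Composing, $F := \exp \circ \Psi \circ \iota$ satisfies $\tfrac16\|x-x'\|_\infty \le \|F(x),F(x')\| \le 2^{2n}\|x-x'\|_\infty$, so $F$ is a quasi-isometric embedding with additive constant $0$ (indeed bi-Lipschitz onto its image), and by construction $F(\RR^n) \subset \{\, (B_1,\dots,B_{2n}) : 8^2 \le B_k^2 \le B_{k+1}\ \forall k \,\}$.

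\emph{The main point} is to meet the positivity bound $B_k \ge 8$ and the doubling bound $B_k^2 \le B_{k+1}$ \emph{simultaneously for all} $x \in \RR^n$, including where coordinates are very negative. The folding step is exactly what guarantees that every perturbation of the baseline $c_k$ is nonnegative, so that the doubling slack $c_{k+1} - 2c_k$ is never consumed; the power-of-$2$ shear then propagates a parameter introduced at slot $\ell$ downward as $2^{k-\ell}p_\ell$, which is the coefficient that makes the telescoping identity above work. Everything else is the routine verification that this shear is coarsely bi-Lipschitz in both directions, with constants allowed to depend on $n$ (which is harmless, since $n$ is fixed).
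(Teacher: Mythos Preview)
Your proof is correct and follows essentially the same three-step architecture as the paper: pass to logarithmic coordinates via the exponential isometry, map $\RR^n$ into the nonnegative orthant of $\RR^{2n}$, and then apply an invertible (hence bi-Lipschitz) linear/affine transformation whose telescoping structure forces the doubling condition $y_{k+1}\ge 2y_k$. The only differences are cosmetic: where the paper cites \cite{sz} for the first embedding and then translates, you supply the explicit folding map $x_j\mapsto(\max(x_j,0),\max(-x_j,0))$; and your affine shear $\Psi(p)_k=c_k+\sum_{\ell\le k}2^{k-\ell}p_\ell$ is a tidier variant of the paper's linear map, with the constants $c_k$ absorbing what the paper achieves by translation.
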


\begin{proof}

The proof comes from composing three quasi-isometric embeddings.

To elaborate, there is a quasi-isometric embedding
\begin{equation}
\label{eqn:known}
(\mathbb{R}^n,||\cdot||_{\infty}) \to (Q_{2n},||\cdot||_{\infty}),
\end{equation}
see \cite{sz}.  By composing with a translation of $Q_{2n}$, we can assume that any $(x_1,\ldots,x_{2n})$ in the image of \eqref{eqn:known} satisfies $x_i \ge 3$; this will be convenient in what follows. 

Next, there is also a quasi-isometric embedding
\begin{equation}
\label{eqn:linear} 
(Q_{2n},||\cdot||_{\infty}) \to  (Q_{2n},||\cdot||_{\infty}),
\end{equation}
\[ (x_1,\ldots,x_{2n}) \to (y_1, \ldots, y_{2n} )\]
where 
\[ y_1 = x_{2n} + \ldots + x_{1},\]
\[ y_2 = 3 x_{2n} + 2 x_{2n-1}\ldots + 2 x_{1},\]
\[  \ldots\] 
\[ y_{i+1} = (2^{i} + 2^{i-1}) x_{2n} + (2^i + 2^{i-2}) x_{2n-1} + \ldots + 2^i x_{2n-i} + \ldots + 2^i x_1.\]

This is a quasi-isometry, in fact, bi-Lipschitz, since it is an invertible linear map; it is invertible because
we can recover each $x_i$ with $i \ge 2$ via the rules $x_{2n} = y_2 - 2 y_1, x_{2n-1} = y_3 - 2 y_2, \ldots,$ and then $x_1 = y_1 - x_2 - \ldots - x_{2n}.$  The key property that it satisfies for our purposes is that each 
\begin{equation}
\label{eqn:key}
y_{i+1} \ge 2 y_i \ge 12.
\end{equation}

Finally,
note the map 
\begin{equation}
\label{eqn:exponent}
(Q_{2n}, || \cdot ||_{\infty} ) \to (Q_{2n}, || \cdot ||), \quad (y_1, \ldots, y_{2n}) \to (e^{y_1}, \ldots, e^{y_{2n}}),
\end{equation}
is, by the definition of the metric $|| \cdot ||$, an isometry.

Let us now put all of this together.  Apply first \eqref{eqn:known}, then \eqref{eqn:linear}, and finally \eqref{eqn:exponent}.  This is a quasi-isometric embedding, and it satisfies $8^2 \le B_k^2 \le B_{k+1}$ by \eqref{eqn:key}.

 \proofend

\end{proof}

\subsection{Proof of Theorem~\ref{thm:main}}
\label{sec:build}

We now prove Theorem~\ref{thm:main}; the proof is the content of this section.

In view of the above quasi-flats in $Q_{2n}$, it suffices to consider parameters $(B_1, \cdots , B_N)$ satisfying $8 \le B_k^2 \le B_{k+1}$ for all $k$; these inequalities will be assumed in what follows.  Also, the quasi-isometry type is unchanged if we further restrict to the case when all $B_k^{k+1} \in \NN$; 
this restriction will also be our standing assumption in what follows.

Throughout we let $C$ denote a constant independent of the $B_k$ but which may depend on $N$. Also write $a \sim b$ if there exists such a $C$ with $a/C \le b \le aC$; write $a \sim \hspace{1 mm} \ge b$ and $a \sim \hspace{1 mm} \le b$ if the corresponding inequalities hold.

Let us now explain the construction of our quasi-flats.  

\begin{remark} 
\label{rem:idea} 
\normalfont
({\em Idea of the proof.})  Here we explain the basic idea of the construction.  Recall that the Weyl Law
reviewed above
asserts that the ECH capacities $c_k$ of a domain grow like $\sqrt{k}$, with coefficient detecting the volume.    The idea is then to construct a domain that realizes the different parameters $B_i$ as volume coefficients on different intervals $k$: the rough idea is to start with a domain of volume $B_1$, solve for the value of $k$ for which the asymptotics of $B_1$ start to dominate, say at $k \ge k_0$, then add to it a piece that increases the volume without affecting any $c_k$ for $k \le k_0$, and to continue in this manner. 
\end{remark}

In \cite[Ex. 4.8]{CG}, a procedure is described to assign to any finite set of positive numbers a connected concave toric domain.   We apply this procedure to associate to the parameters $B_i$ from above
the 
concave toric domain derived from the disjoint union
of ellipsoids 
$$\hat{X} = \cup_{i=1}^N E(B_i^{-i/2}, B_i^{1+i/2}),$$ 
in other words, we decompose each of the ellipsoids into their weight sequence, and then concatenate these weights into a concave toric domain following \cite[Ex. 4.8]{CG}.  
Let us record in particular that it then follows that the weights of $\hat{X}$ are of the form 
\[ a_i = B_i^{-i/2}\]
with $a_i$ appearing  $B_i^{i+1}$ times.

We sometimes write the domain $\hat{X}_v$ when we want to be explicit about the dependence on the parameters $v = (B_1,\ldots, B_N)$.

Our first order of business is to smooth the domains $\hat{X}$, and obtain upper bounds on the Banach-Mazur distance.  In fact, our upper bounds will come from scaling, so all of this is elementary and collected in the following lemma.

\begin{lemma}
\label{lem:upperbound}
Let $v = (B_1,\ldots, B_N)$ and $w = (B'_1,\ldots, B'_N)$.

There is a smoothing of the domain $\hat{X}_v$ to a concave toric domain $X_v$ 
containing $\hat{X}_v$
with smooth boundary and the following properties:
\begin{itemize}
\item The weights of $X_v$ contain the weights of $\hat{X}_v,$ all other weights have size $ < B^{-N}_N,$
 and $vol(X_v) \le vol(\hat{X}_v) + 1$.  
\item $d_{SM}(X_v,X_w) \le (N/2 + 1) ||v,w|| + 1$ 
\end{itemize}

\end{lemma}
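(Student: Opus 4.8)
The plan is to prove the two bullet points separately, since the first is a purely combinatorial/metric statement about concave toric domains and the second is an elementary consequence of scaling. For the first bullet: recall that $\hat X_v$ is a disjoint union of ellipsoids concatenated via \cite[Ex.~4.8]{CG} into a single connected concave toric domain whose moment polytope is a staircase. The weights $a_i = B_i^{-i/2}$ (with multiplicity $B_i^{i+1}$) are determined by the weight expansion. To smooth, I would round the corners of the staircase polytope $\Omega_v$ in a small neighborhood of each vertex, staying concave: since the weight expansion is a greedy ball-packing by inclusion in the moment image, enlarging $\Omega_v$ slightly to a smooth concave region $\Omega_v'$ with $\Omega_v \subset \Omega_v'$ guarantees (by monotonicity of the weight expansion under inclusion) that the new weights \emph{contain} the old ones; the only new weights introduced are the small balls filling in the rounded corners, and these can be made to have size $< B_N^{-N}$ by rounding on a sufficiently small scale. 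The volume bound $\vol(X_v) \le \vol(\hat X_v) + 1$ follows because the total area added to $\Omega_v$ can be taken $< 1$ (indeed arbitrarily small). I should be slightly careful that the smoothing procedure is uniform enough — but since $N$ is fixed and we allow the constant to depend on $N$, and since each $B_k \ge 8$, the scale of rounding needed is controlled, so this is routine.

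For the second bullet, the key observation is that $\hat X_v$ and $\hat X_w$ differ only by scaling in the individual ellipsoid factors: $E(B_i^{-i/2}, B_i^{1+i/2})$ versus $E((B_i')^{-i/2}, (B_i')^{1+i/2})$. Writing $\|v,w\| = \max_i |\ln(B_i/B_i')| =: \delta$, each exponent $-i/2$ or $1 + i/2$ is at most $N/2 + 1$ in absolute value, so $B_i^{-i/2} \le e^{(N/2)\delta}(B_i')^{-i/2}$ and similarly $B_i^{1+i/2} \le e^{(N/2+1)\delta}(B_i')^{1+i/2}$. Hence $E(B_i^{-i/2}, B_i^{1+i/2}) \subset e^{(N/2+1)\delta} \cdot E((B_i')^{-i/2}, (B_i')^{1+i/2})$, and since inclusion of each ellipsoid factor into a scaled copy is realized by inclusion of the moment regions, the concatenated domain $\hat X_v$ includes into $e^{(N/2+1)\delta} \cdot \hat X_w$, hence (up to the smoothing error of $1$, absorbed into the additive constant) $X_v$ includes into $e^{(N/2+1)\delta + 1} \cdot X_w$. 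Symmetrically for the reverse inclusion. Since inclusions are Hamiltonian (indeed, genuine) embeddings, this gives $d_{SM}(X_v, X_w) \le (N/2+1)\|v,w\| + 1$ directly from the definition \eqref{BMexact} with $\ln T = (N/2+1)\delta + 1$.

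I expect the main obstacle to be the first bullet: verifying that the smoothing can be performed so that the weight expansion of the smoothed domain genuinely \emph{contains} the weight expansion of $\hat X_v$ with only tiny new weights, rather than perturbing the existing weights. This requires understanding how the greedy weight-expansion algorithm behaves under a small concave enlargement of the moment polytope near its corners — intuitively the large balls are pinned down by the straight edges of the staircase and only the corner regions contribute new, small balls, but making this precise needs a careful look at the construction in \cite{ccghr, CG}. The metric bound in the second bullet, by contrast, is essentially immediate once one tracks the exponents.
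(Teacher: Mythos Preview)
Your proposal is correct and follows essentially the same route as the paper: the paper also derives the inclusion $\hat X_v \subset m^{N/2+1}\cdot \hat X_w$ (with $m=e^{\|v,w\|}$) from the ellipsoid-by-ellipsoid scaling bound and the concatenation procedure of \cite[Ex.~4.8]{CG}, and then smooths by adding small curves at the corners of the staircase so that the approximation is Hausdorff close, yielding both bullets. The only difference is emphasis: the paper treats the smoothing step in one sentence (``it follows from the definition of the weight expansion''), whereas you flag it as the main technical point; in fact for a staircase region the greedy expansion reproduces exactly the original triangles, so rounding the corners only introduces new small weights, and your concern dissolves once this is observed.
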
 

\begin{proof}
Let $m := e^{||v,w||}$.  We first note that 
\begin{equation}
\label{eqn:inclusion}
\hat{X}_v \subset m^{N/2 + 1} \cdot \hat{X}_w, \quad \hat{X}_w \subset m^{N/2 + 1} \cdot \hat{X}_v.
\end{equation}
Indeed, by definition, each $E(B_i^{-i/2}, B_i^{1+i/2}) \subset m^{N/2 + 1} E(B_i^{'-i/2}, B_i^{'1+i/2})$, with the corresponding containment also holding with $B'_i$ and $B_i$ reversed, and then this follows from the construction procedure from \cite[Ex. 4.8]{CG}.


Now it follows from the definition of the weight expansion that we can approximate $\hat{X}_v$ and $\hat{X}_w$ by the smooth domains $X_v$ and $X_w$, such that the first item of the lemma holds: to do this, we just add to the upper boundary of $\hat{X}_v$ (resp. $\hat{X}_w$) sufficiently small smooth curves connecting any two consecutive line segments.  Moreover, these approximations can be assumed Hasudorff close to $\hat{X}_v$ (resp. $\hat{X}_w$), and so the second item follows.
\proofend


\end{proof}


It remains to show the lower bound, which will take up the remainder of this section. More precisely, we aim to prove the following:

 \begin{proposition}\label{bm} $$d_{SM}(X_v, X_w) \sim \hspace{1 mm} \ge \max_i | \log (\frac{B_i}{B'_i}) |.$$
\end{proposition}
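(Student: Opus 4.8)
The plan is to establish the lower bound $d_{SM}(X_v,X_w) \gtrsim \max_i|\log(B_i/B'_i)|$ by fixing a single index $i$ and showing $d_{SM}(X_v,X_w) \gtrsim |\log(B_i/B'_i)|$ using the ECH capacities at a carefully chosen scale $k$. The key structural fact we will exploit is the Weyl Law: if $X_v \hookrightarrow T \cdot X_w$ then $c_k(X_v) \le c_k(T\cdot X_w) = T c_k(X_w)$ for all $k$, and $c_k$ grows like $\sqrt{4\,\vol\cdot k}$, but crucially the \emph{effective volume seen at scale $k$} for our domains $\hat X_v$ is not the total volume: it is a partial sum. Indeed, since the weights of $\hat X_v$ are $a_j = B_j^{-j/2}$ appearing $B_j^{j+1}$ times, the packing consisting of the first $i$ blocks of balls has total volume $\sum_{j\le i} B_j^{j+1}\cdot \tfrac12 a_j^2 = \sum_{j\le i}\tfrac12 B_j$, and — because of the rapid-growth condition $8\le B_k^2\le B_{k+1}$, which forces the later weights $a_j$ ($j>i$) to be extremely small — there is a threshold $k_i$ (roughly $k_i \sim \sum_{j\le i} B_j^{j+1}$, i.e. the number of balls in the first $i$ blocks) below which the capacities $c_k(\hat X_v)$ are governed essentially only by the first $i$ blocks, while for $k$ slightly below $k_i$ one has $c_k(\hat X_v) \sim \sqrt{k\cdot \sum_{j\le i} B_j}$, with the $i$-th term dominating the sum since $B_i$ grows so fast relative to $B_1,\dots,B_{i-1}$ (again by $B_k^2\le B_{k+1}$). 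So $c_{k_i}(\hat X_v)^2/k_i \sim B_i$, and similarly $c_{k_i'}(\hat X_w)^2/k_i' \sim B_i'$ at the analogous threshold $k_i'$.

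The main steps, in order, are as follows. First, compute $c_k(\hat X_v)$ for $k$ near the threshold $k_i$: by the weight-expansion formula $c_k(X) = \max_{k_1+\cdots+k_m=k} \sum c_{k_j}(B(a_j))$ together with the explicit ball capacities $c_{ECH}(B(a)) = (0,a,a,2a,2a,2a,\ldots)$, distributing $k$ among the available balls optimally. The point is that when $k$ is comparable to (but at most) the number $\sum_{j\le i}B_j^{j+1}$ of balls in the first $i$ blocks, the optimal distribution puts roughly one unit of index into each of $\sim k$ of those balls, giving $c_k(\hat X_v)$ of order the reciprocal of the "density" — more carefully, one shows $c_k(\hat X_v) \sim \sqrt{k \cdot B_i}$ for $k$ in a window around $k_i := \sum_{j\le i} B_j^{j+1}$, using that $B_i^{i+1}$ dominates $\sum_{j<i}B_j^{j+1}$ by a constant factor depending only on $N$ (since $B_j \le B_i^{2^{-(i-j)}}$ roughly). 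This requires being careful that the smaller later balls (blocks $j>i$) contribute a negligible amount at this range of $k$, and that the much larger earlier balls are too few in number to help once $k > \sum_{j<i}B_j^{j+1}$; both follow from the geometric-type growth condition. Second, transfer this to $X_v$ and $X_w$: by Lemma~\ref{lem:upperbound}, $X_v$ contains $\hat X_v$ and has only a bounded extra volume with all extra weights tiny, so $c_k(X_v) \sim c_k(\hat X_v)$ in the relevant range (monotonicity gives $\ge$, and the controlled weights give $\le$ up to the additive/multiplicative constants absorbed in $\sim$). Third, suppose $d_{SM}(X_v,X_w) = \ln T$; then $c_k(X_v)\le T\, c_k(X_w)$ and $c_k(X_w)\le T\,c_k(X_v)$ for all $k$. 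Evaluate the first at $k = k_i$ and use $c_{k_i}(X_v)^2/k_i \sim B_i$ while $c_{k_i}(X_w)^2/k_i \sim \le \max_{j\le i} B_j' \sim \le B_i'$ (the latter because at \emph{any} scale $k$, $c_k(X_w)^2/k$ is bounded above by a constant times the largest volume coefficient that block structure can offer, which is $\sim B_i'$ when $k\le k_i$ — here I use that $k_i$ and $k_i'$ are comparable since $B_j \sim B_j'$ to within the multiplicative factor $e^{\|v,w\|}$, so one can cross-compare; alternatively just evaluate at $\min(k_i,k_i')$). Combining gives $B_i \sim \le T^2 B_i'$, hence $\tfrac12\log(B_i/B_i') \sim \le \ln T + C$, i.e. $\log(B_i/B_i') \lesssim d_{SM}(X_v,X_w)$; by symmetry the absolute value is controlled, and taking the max over $i$ finishes the proof.

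I expect the main obstacle to be Step~1 — pinning down $c_k(\hat X_v)$ in the critical window around $k_i$ with enough precision to read off the coefficient $B_i$ (not merely $\sum_{j\le i}B_j$ or something off by an $i$-dependent but harmless factor, which is fine, versus an unbounded factor, which would be fatal). The delicate point is the interplay between the \emph{number} of balls in each block, $B_j^{j+1}$, and their \emph{size}, $B_j^{-j/2}$: the product that controls the volume contribution is $B_j^{j+1}\cdot a_j^2 = B_j$, so one must check that the optimizer in the weight-expansion max genuinely "fills up" the first $i$ blocks before being forced to use the tiny later balls or run out of the big early ones, and that this happens precisely at $k \sim \sum_{j\le i}B_j^{j+1}$. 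The growth hypothesis $8 \le B_k^2 \le B_{k+1}$ is exactly what makes each block overwhelmingly more numerous than all previous blocks combined ($B_i^{i+1} \gg \sum_{j<i} B_j^{j+1}$ by an $N$-dependent constant) and simultaneously makes the weights $a_j$ decay fast enough that block $i+1$ and beyond are irrelevant at scale $k_i$; verifying these two quantitative comparisons carefully, and confirming they survive the comparison between the $v$ and $w$ thresholds $k_i, k_i'$ (which differ only by the bounded factor $e^{\|v,w\|}$ in each coordinate, hence by a bounded factor overall once $N$ is fixed... though one must be slightly cautious since $\|v,w\|$ itself is the quantity we are bounding — so it is cleanest to evaluate all capacities at $k := \min(k_i, k_i')$ and note $k \sim k_i \sim k_i'$ is \emph{not} automatic; instead one uses $c_k(X_v)^2/k \sim \ge B_i$ for $k$ in a whole window of length comparable to $k_i$ that overlaps the corresponding window for $w$ whenever $\|v,w\|$ is, say, at least $1$, the bounded case $\|v,w\|\le 1$ being trivial), is where the real work lies.
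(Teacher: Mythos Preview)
Your overall plan is the same as the paper's: establish that $c_k(X_v)^2/k \sim B_i$ on a window $[B_i^{i+1}, B_{i+1}^{i+1}]$ (this is the paper's Lemma~\ref{cap}, whose proof you sketch correctly), and then compare $c_k(X_v)$ and $c_k(X_w)$ at a single well-chosen $k$. However, there is a genuine gap precisely at the step you flag as delicate --- the window-matching --- and your proposed fix does not work.

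Concretely, suppose the maximum is attained at index $M$ with $B'_M > B_M$. To compare, you need a $k$ lying simultaneously in the $M$-window $[B_M^{M+1},\,B_{M+1}^{M+1}]$ for $X_v$ and in the $M$-window $[{B'_M}^{M+1},\,{B'_{M+1}}^{M+1}]$ for $X_w$. Since $B'_M>B_M$, any such $k$ must satisfy $k\ge {B'_M}^{M+1}$, and for it also to lie in the $v$-window you need ${B'_M}\le B_{M+1}$. This is not a consequence of ``$\|v,w\|\ge 1$'' and is false for non-maximizing indices in general; your attempt to prove the bound for each $i$ separately and then take the max therefore breaks down. Your alternative of evaluating at $k=\min(k_i,k_i')$ fails for the same reason: if $B'_M>B_M$ then $\min=k_M<{B'_M}^{M+1}$, so at this scale $c_k(X_w)^2/k$ is governed by some $B'_j$ with $j<M$, and you do not recover the ratio $B'_M/B_M$. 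Likewise your claim ``$c_{k_i}(X_w)^2/k_i \lesssim B'_i$ at any scale $k\le k_i$'' is only correct if $k_i$ has not already entered the $(i{+}1)$-window for $X_w$, which again needs $B_i\le B'_{i+1}$.

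The missing idea is the paper's key observation~\eqref{eqn:fromassumption}: \emph{using the maximality of $M$} together with $B'^2_M\le B'_{M+1}$, one shows $B'_M\le B_{M+1}$ (otherwise $B'_{M+1}/B_{M+1} > B'_{M+1}/B'_M \ge B'_M \ge B'_M/B_M$, contradicting maximality). This is exactly what guarantees the two $M$-windows overlap, so that $k={B'_M}^{M+1}$ works for both domains. You should also treat the case $M=N$ separately (the window argument needs $M<N$); here the Weyl law and a straight volume comparison suffice, since $\vol(X_v)\sim B_N$.
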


Note that Theorem~\ref{thm:main} is an immediate consequence of Proposition~\ref{bm} and Lemma~\ref{lem:upperbound}.

As preparation for the proof, we suppose $\max_i | \log (B_i / B'_i) | = \log (B'_M / B_M)$.  If $M = N$, then volume considerations suffice for Proposition~\ref{bm}, so we assume $M < N$; this will be our standing assumption for the remainder of the note.  Then it suffices to find a $k$ with $\log (c_k(X') / c_k(X)) \sim \log( B'_M / B_M)$, where the $c_k$ are the ECH capacities from \ref{sec:ECH}.

Our first key observation is that
our assumption on the maximum implies 
\begin{equation}
\label{eqn:fromassumption}
B'_M \le B_{M+1},
\end{equation} 
since otherwise we have $$\frac{B'_{M+1}}{B_{M+1}} > \frac{B'_{M+1}}{B'_{M}} \ge B'_{M} \ge \frac{B'_{M}}{B_{M}}$$ where the second and third inequalities apply our assumptions that $B'_{M+1} \ge {B'_M}^2$ and $B'_M \ge 1$ respectively. This gives a contradiction.

We now seek to establish the following Lemma:

\begin{lemma}\label{cap} Assume that $M < N$ and suppose $B_M^{M+1} \le k \le B_{M+1}^{M+1}$. Then $c_k(X) \sim k^{1/2} B_M^{1/2}$.
\end{lemma}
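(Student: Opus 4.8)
The strategy is to estimate $c_k(X)$ by estimating $c_k(\hat X)$ (the two differ only by the smooth correction controlled in Lemma~\ref{lem:upperbound}, which by the weight bound $< B_N^{-N}$ adds a negligible amount once $k$ is in the stated range) and then to compute $c_k(\hat X)$ using the packing formula $c_k(\hat X) = \max_{k_1 + \cdots + k_m = k} \sum_i c_{k_i}(B(a_i))$, recalling that $\hat X$ has weight $a_i = B_i^{-i/2}$ with multiplicity $B_i^{i+1}$, and $c_\ell(B(a)) \sim a\sqrt\ell$ with $c_\ell(B(a)) = 0$ for $\ell = 0$. First I would prove the lower bound: pack all the budget $k$ into the copies of the ball $B(a_M) = B(B_M^{-M/2})$. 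There are $B_M^{M+1}$ such copies, and since we assume $k \le B_{M+1}^{M+1}$ and \eqref{eqn:fromassumption} gives $B'_M \le B_{M+1}$ (though really what I want here is just $k \le B_{M+1}^{M+1}$ versus the multiplicity $B_M^{M+1}$)... more carefully: I would split $k$ as evenly as possible among the $B_M^{M+1}$ copies of $B(a_M)$, getting roughly $k / B_M^{M+1}$ in each, so $c_k(\hat X) \sim \hspace{1mm}\ge \sqrt{B_M^{M+1}} \cdot a_M \sqrt{k/B_M^{M+1}} = a_M \sqrt k = B_M^{-M/2}\sqrt k$. Hmm — that gives $B_M^{-M/2}$, not $B_M^{1/2}$. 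So the right thing is a Weyl-law / volume computation: the union of the $B_M^{M+1}$ balls of capacity $a_M = B_M^{-M/2}$ is, by the explicit ellipsoid weight expansion, essentially $E(B_M^{-M/2}, B_M^{1+M/2})$, whose volume is $\tfrac12 B_M^{-M/2}\cdot B_M^{1+M/2} = \tfrac12 B_M$; the Weyl law then predicts $c_k \sim \sqrt{4 \cdot \tfrac12 B_M \cdot k} = \sqrt{2 B_M k} \sim k^{1/2}B_M^{1/2}$, which matches. So the lower bound should come from: $E(B_M^{-M/2}, B_M^{1+M/2}) \subset \hat X$ by construction, hence $c_k(\hat X) \ge c_k(E(B_M^{-M/2},B_M^{1+M/2}))$, and for $k$ in the stated range the Weyl law for ellipsoids (a direct computation, as in the footnote) gives $c_k(E(B_M^{-M/2},B_M^{1+M/2})) \sim k^{1/2}B_M^{1/2}$ — here the lower endpoint $k \ge B_M^{M+1}$ is what ensures we are already in the asymptotic regime of this ellipsoid, so the error terms are absorbed into $\sim$.

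The upper bound is the main obstacle. I must show no packing of budget $k$ into the balls of $\hat X$ beats $\sim k^{1/2}B_M^{1/2}$; the danger is the ``large" balls $B(a_i)$ with $i < M$, which have $a_i = B_i^{-i/2} > a_M$ individually but only multiplicity $B_i^{i+1}$. The clean way is: $\hat X \subset X_v$ and, more usefully, I would bound $\hat X$ from above by a single concave toric domain (or ellipsoid) of controlled volume that still reflects the relevant weights. Concretely, the weights $a_1, \ldots$ of $\hat X$ that are $> a_M$ come from the ellipsoids $E(B_i^{-i/2}, B_i^{1+i/2})$ with $i \le M$, whose total volume is $\sum_{i\le M} \tfrac12 B_i \le C B_M$ since $B_i \le B_M$ for $i \le M$ (the $B_k$ are increasing). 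The weights $\le a_M$ contribute, for our range of $k$, at most an $O(\sqrt k \cdot a_M) = O(\sqrt k \, B_M^{-M/2})$ which is $\sim\le k^{1/2}B_M^{1/2}$. So I would argue: given an optimal packing realizing $c_k(\hat X)$, restrict attention to the contribution from balls of capacity $> a_M$; these live inside a concave toric domain $Y$ of volume $\le C B_M$ obtained by concatenating the ellipsoids $E(B_i^{-i/2},B_i^{1+i/2})$, $i \le M$; for that $Y$ the Weyl law (upper bound form, valid for all $k$ — this is the inequality $c_k(X,\omega)^2 \le 4\,\vol(X,\omega)\,k + \text{(lower order)}$, or more simply monotonicity of $Y$ into a ball of comparable volume) gives their total contribution is $\sim\le \sqrt{B_M k}$; and the remaining balls (capacity $\le a_M$) contribute $\sim\le a_M \sqrt k \le \sqrt{B_M k}$ since $a_M = B_M^{-M/2} \le B_M^{1/2}$. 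Adding, $c_k(\hat X) \sim\le k^{1/2}B_M^{1/2}$.

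I expect the delicate bookkeeping to be: (i) controlling the smoothing error $c_k(X) - c_k(\hat X)$ — but since $X_v$ contains $\hat X_v$ with only extra weights $< B_N^{-N}$ and $\vol(X_v) \le \vol(\hat X_v)+1$, monotonicity plus the volume bound pin $c_k(X_v)$ between $c_k(\hat X_v)$ and something with the same $\sim$-class for $k$ in range; (ii) making rigorous the decomposition of an optimal packing into ``big-ball part'' and ``small-ball part,'' for which I would use the packing formula directly — any maximizing tuple $(k_1,\ldots,k_m)$ splits according to whether $a_i > a_M$ or not, and the two partial sums are each maximized independently, giving two sub-packings of $Y$ and of the ``small'' sub-domain respectively, each estimated by its volume via the Weyl law. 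The monotone increase of the $B_k$ (hence of the volumes $\tfrac12 B_i$ for $i \le M$) is what makes the ``big-ball'' volume comparable to $B_M$ alone, and $M < N$ together with $k \le B_{M+1}^{M+1}$ is what keeps us below the threshold where the $(M{+}1)$-st ellipsoid would start to contribute — so both hypotheses of the lemma are used exactly where advertised.
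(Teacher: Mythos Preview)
Your lower bound matches the paper's (both put the budget into the $M$-th block of weights; the hypothesis $k\ge B_M^{M+1}$ is exactly what puts you in the Weyl regime of that ellipsoid), and your big-ball upper bound via Cauchy--Schwarz is correct: for any concave toric domain with weights $\{w_j\}$ one has $c_k \le \sqrt{2k}\,\bigl(\sum w_j^2\bigr)^{1/2} = 2\sqrt{k\,\vol}$, and the big part ($i\le M$) has volume $\sim B_M$. The gap is in your treatment of the small-ball part ($i>M$). Your proposed justification ``estimated by its volume via the Weyl law'' yields only $\le 2\sqrt{k\, B_N}$, since the small part has volume $\sim B_N$, not $\sim B_M$; this is useless. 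And the asserted bound $O(a_M\sqrt k)$ is simply false as a bound on an arbitrary allocation to the small balls: putting budget $1$ into each of $k$ of the $n_{M+1}=B_{M+1}^{M+2}>k$ copies of $B(a_{M+1})$ already produces contribution $a_{M+1}k$, and $a_{M+1}k/(a_M\sqrt k)$ equals $B_M^{M/2}$ at $k=B_{M+1}^{M+1}$.

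Your decomposition strategy is salvageable with the correct small-part bound: since every small weight is $\le a_{M+1}$ and $\sum d_j \le \tfrac12\sum(d_j^2+d_j)\le k$, the small contribution is at most $a_{M+1}k$; and $a_{M+1}k\le\sqrt k$ precisely because $k\le B_{M+1}^{M+1}=a_{M+1}^{-2}$. So this, not any Weyl law for the small piece, is where the upper hypothesis on $k$ enters. For comparison, the paper organizes the upper bound differently: rather than bounding the two pieces independently, it proves an exchange lemma (Lemma~\ref{extra}) showing that in the \emph{optimizing} sequence one has $\sum_{i>M,j}d_{i,j}\le\sqrt{8k/B_M^{M+1}}$, so almost no budget goes to small balls at the optimum; it then maximizes over the big balls alone via a Lagrange-multiplier computation. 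Your corrected route is more direct; the paper's isolates the ``small terms are negligible'' mechanism as a standalone lemma.
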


We can think of the lemma as asserting that the $c_k$ in the above range see only the volume of the first $M$ pieces of $X$; see Remark~\ref{rem:idea}.

Before giving the proof, 
let us explain why the lemma proves the proposition:

\vspace{1 mm}

{\em Proof of Proposition~\ref{bm}, assuming Lemma \ref{cap}.}  Recall our standing assumption that $M < N.$  We choose $k = {B'_M}^{M+1}$. Then by Lemma \ref{cap} we have $c_k(X') \sim k^{1/2} {B'_M}^{1/2}$.
On the other hand, by \eqref{eqn:fromassumption} we have $B_M \le B'_M \le B_{M+1}$. Hence by Lemma  \ref{cap} again we have $c_k(X) \sim k^{1/2} {B_M}^{1/2}$ and then Proposition~\ref{bm} follows.  
\proofend


Thus, it remains to prove Lemma \ref{cap}, and this will take up the remainder of the file.

Let  $n_i = B_i^{1+i} \in \NN$ for all $i$. Then setting $a_i = B_i^{-i/2}$ we have the weight decomposition for $\hat{X}$: $$\sqcup_i B(a_i)^{ \times n_i}.$$ 
There is a similar weight decomposition for $X$, where the additional weights are all smaller than $a_N$, by Lemma~\ref{lem:upperbound}.
We recall the formula for the ECH capacities for a concave toric domain in terms of its weight sequence; see \ref{sec:ECH}. In the case of $X$ this gives 
\begin{equation}
\label{eqn:ck}
c_k(X) = \max\{ \sum_{i,j} d_{i,j} a_i \, | \, 1 \le j \le n_i, \sum_{i,j} (d_{i,j}^2 + d_{i,j}) \le 2k \},
\end{equation}
where the $d_{ij}$ are nonnegative integers; it might be helpful to note that the  $d_{ij}$ for $1 \le i \le N$ correspond to the weights of $\hat{X}$, while the higher index $d_{ij}$ come from the further smaller weights introduced in the smoothing.  We call a sequence of $d_{i,j}$ satisfying the necessary bound a {\em candidate sequence}, and a candidate sequence actually realizing the max is called an {\em optimizing sequence}.

To prove Lemma~\ref{cap}, we will first prove the following lemma, 
which essentially asserts that for $k$ in a certain range, only the first $P$ pieces of $X$ contribute to $c_k$.

\begin{lemma}\label{extra} Suppose $P < N,$ 
$B_P \ge 8$, 
and $k \le  B_{P+1}^{P+1}$. Then the optimizing sequence for $c_k(X)$ has 
\[
\sum_{i>P, j} d_{i,j} \le \sqrt{ \frac{8k}{B_P^{P+1}} } . 
\]
In particular the contribution of these terms to $c_k(X)$ is at most $\sqrt{8k} B_P^{-P/2 -1/2} B_{P+1}^{-P/2 - 1/2}$.
\end{lemma}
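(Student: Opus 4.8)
The statement concerns bounding the contribution of the ``late'' weights (indices $i > P$) to an optimizing sequence for $c_k(X)$, under the hypothesis $k \le B_{P+1}^{P+1}$. The plan is to exploit the constraint $\sum_{i,j}(d_{i,j}^2 + d_{i,j}) \le 2k$ from \eqref{eqn:ck} together with the fact that the weights $a_i = B_i^{-i/2}$ for $i > P$ are small, so that moving ``weight budget'' from late indices to early indices (specifically to the $B(a_P)$ pieces) can only increase the objective $\sum d_{i,j} a_i$. First I would observe that in an optimizing sequence, the $d_{i,j}$ for $i \le P$ should be ``saturated'' in the sense that we may assume each such $d_{i,j}$ is taken as large as the remaining budget allows when processing pieces greedily by weight size; this is essentially the greedy structure already implicit in the weight-expansion formula. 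The key point is that $a_P = B_P^{-P/2}$ is much larger than all later weights, and there are $n_P = B_P^{P+1}$ copies of $B(a_P)$, so there is a lot of ``room'' to absorb budget at the $P$-th level.

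The core estimate I would isolate is the following: if $S := \sum_{i > P, j} d_{i,j}$ denotes the total multiplicity coming from late pieces in the optimizing sequence, then the budget consumed by these terms is at least $S$ (from the linear part $\sum d_{i,j}$) but in fact I want a quadratic lower bound. Here is where the hypothesis $k \le B_{P+1}^{P+1}$ enters: each late weight $a_i$ with $i > P$ satisfies $a_i = B_i^{-i/2} \le B_{P+1}^{-(P+1)/2}$, and the total value contributed is $\sum_{i>P,j} d_{i,j} a_i \le S \cdot B_{P+1}^{-(P+1)/2}$. To bound $S$ itself, I would argue by an exchange/replacement argument: given an optimizing sequence, replace a unit of $d_{i,j}$ at a late index by a corresponding increase at the $P$-th level. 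Because $d^2 + d$ grows superlinearly, the budget freed by removing a chunk of size roughly $S$ concentrated at late indices is at least on the order of $S^2/(\text{number of late pieces used})$; but more simply, since we can always redistribute the late-index multiplicity as $d_{i,j} = 1$ across distinct small pieces, the quadratic cost of carrying total multiplicity $S$ among late pieces is minimized and equals roughly $S$ plus the cost from the early pieces, while the early pieces $B(a_P)$ can absorb an extra unit each at marginal cost $\le 2d$ where $d$ is the current value — and since there are $B_P^{P+1}$ of them, the first $B_P^{P+1}$ units cost only $O(1)$ each. Comparing the value gained, $a_P = B_P^{-P/2}$ per unit moved to level $P$, against $\le B_{P+1}^{-(P+1)/2}$ per unit at a late index — and using $B_{P+1} \ge B_P^2 \ge 8 B_P$ so that $B_{P+1}^{-(P+1)/2} \ll a_P$ — one concludes that in any optimizing sequence the late indices cannot carry more multiplicity than the budget constraint forces, giving $S \le \sqrt{8k / B_P^{P+1}}$.

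Let me be more precise about extracting the bound $S \le \sqrt{8k/B_P^{P+1}}$. The cleanest route: suppose for contradiction $S > \sqrt{8k/B_P^{P+1}} =: S_0$. The late-index terms consume at least $S$ units of the linear budget and at least $S$ of the quadratic budget (since $d^2 + d \ge d$), but actually I want to show we are ``wasting'' budget that, if instead spent on the $P$-th level pieces, would strictly increase $c_k$. Spending a budget of $2t$ on raising a single $B(a_P)$ piece from $0$ to $d$ requires $d^2 + d \le 2t$, i.e. $d \approx \sqrt{2t}$, yielding value $\approx a_P\sqrt{2t}$; spreading $2t$ across $r$ of the $B(a_P)$ pieces, each to value $d \approx \sqrt{2t/r}$, yields value $\approx a_P \sqrt{2tr}$, which is larger for larger $r$ — and we have $r = B_P^{P+1}$ pieces available. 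So the value per unit of quadratic budget at level $P$, when spread maximally, is $\sim a_P \sqrt{B_P^{P+1}} / \sqrt{\text{budget}}$, to be compared with the value per unit budget at a late index, which for $d_{i,j}=1$ is $a_i$ per one unit of budget, i.e.\ at most $B_{P+1}^{-(P+1)/2}$. Working this out, the late-index allocation is suboptimal as soon as its total multiplicity $S$ exceeds $S_0 = \sqrt{8k/B_P^{P+1}}$ — at that point the quadratic budget it consumes, reallocated to level $P$, produces strictly more value. This contradicts optimality, establishing the displayed inequality. The ``in particular'' then follows immediately: the value contributed by late terms is $\sum_{i>P,j} d_{i,j} a_i \le S \cdot \max_{i>P} a_i \le \sqrt{8k/B_P^{P+1}} \cdot B_{P+1}^{-(P+1)/2} = \sqrt{8k}\, B_P^{-(P+1)/2} B_{P+1}^{-(P+1)/2}$, which is exactly the claimed bound (noting $-(P+1)/2 = -P/2 - 1/2$).

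\textbf{Main obstacle.} The delicate point is making the exchange argument rigorous with integer constraints: the $d_{i,j}$ must be nonnegative integers, so ``spreading budget $2t$ evenly across $r$ pieces'' is only approximate, and one must track the $O(1)$ errors carefully — in particular checking that the $B_P \ge 8$ hypothesis (hence $B_{P+1} \ge 64$) gives enough of a gap between $a_P$ and the late weights to swallow these rounding losses. I would handle this by noting that since we assumed all $B_k^{k+1} \in \NN$, the multiplicities $n_i$ are integers and large, so rounding costs at most a constant factor absorbed into the $\sim$ notation; the genuine content is the quadratic-versus-linear budget comparison, which is robust. A secondary subtlety is ensuring the optimizing sequence can indeed be taken in a ``greedy'' normal form at the early levels so that the exchange is well-defined — but this is standard for the weight-expansion formula and follows from the fact that replacing any $d_{i,j}$ at level $\le P$ by a more spread-out allocation never decreases the objective while keeping the constraint.
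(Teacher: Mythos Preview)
Your overall strategy --- an exchange argument showing that if $S=\sum_{i>P,j}d_{i,j}$ is too large then shifting weight to level~$P$ strictly increases the objective --- is exactly the paper's approach, and your derivation of the ``in particular'' clause is correct. But the core of the exchange has a real gap.

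The problem is that your computation ``spreading budget $2t$ across $r$ of the $B(a_P)$ pieces, each to value $d\approx\sqrt{2t/r}$'' tacitly assumes those pieces start at $d_{P,j}=0$. In an optimizing sequence the $d_{P,j}$ may already be large, and then the marginal cost of raising any one of them by $1$ is $2d_{P,j}+2$, which can exceed the budget freed by lowering late terms. Your continuous ``value per unit budget'' comparison never controls this marginal cost, and consequently you never actually \emph{derive} the threshold $\sqrt{8k/B_P^{P+1}}$; it is simply asserted. (Relatedly, the claim that the optimizing sequence can be put in a greedy-by-weight normal form at the early levels is not justified and is not a property of the formula~\eqref{eqn:ck}.)

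The paper closes this gap with one pigeonhole step you are missing: set $d=\sqrt{2k/n_P}$ with $n_P=B_P^{P+1}$ and observe $n_P(d^2+d)>2k$, so \emph{some} index $j_0$ has $d_{P,j_0}<d$. This pins down a single cheap slot at level~$P$. The swap is then one explicit integer move rather than a diffuse reallocation: decrement positive late entries a total of $2d_{P,j_0}$ times (possible since $S>2d>2d_{P,j_0}$), then increment $d_{P,j_0}$ once. The budget does not increase, and the objective rises by at least
\[
a_P-2d_{P,j_0}\,a_{P+1}\;>\;B_P^{-P/2}-2\sqrt{2k/B_P^{P+1}}\,B_{P+1}^{-(P+1)/2}\;\ge\;B_P^{-P/2}-2\sqrt{2/B_P^{P+1}},
\]
using $k\le B_{P+1}^{P+1}$; this is exactly where the hypothesis $B_P\ge 8$ and the constant $2d=\sqrt{8k/B_P^{P+1}}$ enter. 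No spreading across many pieces and no greedy normal form is needed --- once you have the cheap slot $j_0$, the single-unit swap already gives the contradiction.
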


\begin{proof} Setting $d = \sqrt{ \frac{2k}{n_P} } $, note that $n_P(d^2 + d) > 2k$ and hence any candidate sequence has $d_{P, j} < d$ for some $j$, say $j = j_0$.

Now, suppose that $\sum_{i>P, j} d_{i,j} > 2d$ 
in an optimizing sequence.   Then we can form a new sequence as follows: we reduce a particular positive $d_{ij}$ with $i > P$ by $1$, iterate this procedure $2d_{P, j_0}$ times, 
and then finally replace 
$d_{P, j_0}$ by $d_{P, j_0} +1$. This does not increase the sum $\sum_{i,j} (d_{i,j}^2 + d_{i,j})$ and so we still get a candidate sequence. However the sum $\sum_{i,j} d_{i,j} a_i$ changes by
at least
$$a_P - 2d_{P, j_0} a_{P+1} > B_P^{-P/2} - 2 \sqrt{ \frac{2k}{B_P^{P+1}} } B_{P+1}^{-P/2 - 1/2},$$ 
$$\ge B_P^{-P/2} - 2 \sqrt{ \frac{2}{B_P^{P+1}} }.$$ 
where in the last inequality we have used the fact that we are assuming $k \le B^{P+1}_{P+1}.$  Since  $B_P^{-P/2} - 2 \sqrt{ \frac{2}{B_P^{P+1}}} \ge 0$ by our assumption on $B_P$, this is a contradiction to the assumption we started with an optimizing sequence. 

\proofend
\end{proof}

We can now give the promised proof of Lemma~\ref{cap}:

\begin{proof} 
{\em Proof of Lemma~\ref{cap}.}  

We use the formua \eqref{eqn:ck}.  By Lemma \ref{extra}, the contribution of the terms $d_{M+1, j}$ is at most $\sqrt{8} B_M^{-M/2 -1/2}$. 
Therefore, to get an upper bound of the correct order, we can optimize only over sequences with all $d_{M+1, j}=0$. For the upper bound we remove the requirement $d_{i,j} \in \ZZ$, so that we are optimizing over an ellipsoid, and then 
the maximum occurs when $d_{i,j} = d_i$ where $d_i = \lambda a_i - 1/2$ and $\sum_i n_i(d_i^2 + d_i) = 2k$.

Substituting for $d_i$ in the sum we get
$$\sum_{i=1}^M n_i(\lambda^2 a_i^2 - 1/4) = 2k$$ and so
$$\lambda^2 \sum^M_{i=1} B_i = 2k + \frac{1}{4} \sum^M_{i=1} B_i^{1+i}.$$

Now $\sum B_i \sim B_M$ and  $\sum B_i^{1+i} \sim B_M^{1+M}$. Given our bounds on $k$ we can therefore first absorb the   $\sum B_i^{1+i}$ into the $k$ term on the right hand side of the above equation, and then conclude that $\lambda^2 \sim k B_M^{-1}$. Hence $\lambda a_i \sim \sqrt{k} B_i^{-i/2}B_M^{-1/2}$ which
implies $d_i  \sim \hspace{1 mm} \le \sqrt{k} B_i^{-i/2}B_M^{-1/2}$.
Plugging this into the capacity estimate we therefore get
$$\sum d_{i,j} a_i = \sum n_i d_i a_i  \sim \hspace{1 mm} \le \sqrt{k} \sum^M_{i=1} B_i B_M^{-1/2} \sim \sqrt{k B_M},$$
which gives the upper bound.

For a lower bound we simply set $$d_{M, j} = d_M = \lfloor \sqrt{ \frac{2k}{n_M} } \rfloor \sim \sqrt{k} B_M^{-M/2 - 1/2}$$ and all other $d_{i,j} =0$.  This is a candidate sequence for $c_k(X)$ and gives the estimate $c_k(X) \sim \ge  \sqrt{k B_M}$ as required.
\proofend
\end{proof}

\subsection{Proof of Theorem~\ref{exactflats}}

Next we prove our generalization to exact symplectic manifolds, Theorem~\ref{exactflats}.

\begin{proof} Fixing a natural number $n$, the proof of Theorem  \ref{thm:main} defined a family of open subsets $X^v$ of $\mathbb{R}^4$, parameterized by
\[ v \in \tilde{Q} := \lbrace (B_1, \ldots, B_{2n}) | \hspace{1 mm} 8^2 \le B_k^2 \le B_{k+1} \hspace{1 mm} \forall k \rbrace \subset Q_{2n}. \]
This family of subsets was shown to admit quasi isometric embeddings of $\mathbb{R}^n$ with respect to the course symplectic Banach-Mazur distance $d_{SM}$.

Let $d = d^{(M, \lambda)}_{SM}$. Then it suffices to produce a family of open subsets $Y^v \subset M$, again for $ v \in \tilde{Q}$, with the property that 
$$d(Y^v, Y^w) \sim d_{SM}(X^v, X^w).$$

Fix a symplectic embedding $\psi: B( \epsilon) \to M$, where $B(\epsilon) \subset \R^4$ is the round ball of capacity $\epsilon$. 
Given $ v \in \tilde{Q}$ we define $T(v) = \frac{1}{\epsilon} \sum B_k^{k+1}$. We check that $\frac{1}{T(v)} \cdot X^v \subset B(\epsilon)$. Thus we can define
$$Y^v = T(v) \cdot \psi( \frac{1}{T(v)} \cdot X^v ).$$ So $Y^v$ is an open subset of $M$ symplectomorphic to $X^v$, and for any $T>0$ we have $T \cdot Y^v$ symplectomorphic to $T \cdot X^v$.

As Hamiltonian diffeomorphisms generate a subset of general symplectic embeddings, by definition we see
$$d(Y^v, Y^w) \ge d_{SM}(X^v, X^w).$$

For the opposite inequality, we recall that the proof of Theorem \ref{thm:main} showed that in fact
$$d_{SM}(X^v, X^w) \sim D(X^v, X^w)$$
where for subsets of $U, V \subset \R^4$ we set
\[ D(U,V) = \inf \lbrace \ln(T) | U \subset T \cdot V, V \subset T \cdot U \rbrace. \]

Hence it suffices to show $d(Y^v, Y^w) \le D(X^v, X^w).$ For this, let $\ln(M) > D(X^v, X^w)$, that is, assume $X^v \subset M \cdot X^w$ and $X^w \subset M \cdot X^v$, and we aim to find a symplectic isotopies mapping $Y^v$ into $M \cdot Y^w$ and $Y^w$ into $M \cdot Y^v$. The situation is symmetric in $v$ and $w$ so we focus on the first case.

Define a function $f(t)$ for $0 \le t \le 2$ by $f(t) = T(v)( (1-t) + tM)$ when $0 \le t \le 1$ and by $f(t) = M( (2-t)T(v) + (t-1)T(w))$ when $1 \le t \le 2$.

Then for $0 \le t \le 2$ we define $$Y^t = f(t) \cdot \psi( \frac{1}{f(t)} \cdot X^v)$$ and claim this is a symplectic isotopy as required.

First we check that $Y^t$ is well defined, that is, $\frac{1}{f(t)} \cdot X^v \subset B(\epsilon)$. For this, when $t \le 1$ we have $f(t) \ge T(v)$ and so the statement follows from the definition of $T(v)$. When $1 \le t \le 2$ we have $$\frac{1}{f(t)} \cdot X^v = \frac{1}{(2-t)T(v) + (t-1)T(w)} \cdot \frac{1}{M} \cdot X^v.$$ As $\frac{1}{M} \cdot X^v \subset X^v \cap X^w$ we have that both $\frac{1}{T(v)} \cdot \frac{1}{M} \cdot X^v$ and $\frac{1}{T(w)} \cdot \frac{1}{M} \cdot X^v$ lie in $B(\epsilon)$ and the conclusion follows.

We have $Y^0 = Y^v$ by definition, and can write $$Y^2 = M \cdot (T(w) \cdot \psi (\frac{1}{T(w)} \cdot \frac{1}{M} \cdot X^v).$$ But as $\frac{1}{M} \cdot X^v \subset  X^w$ the set $Y^2$ lies in $M \cdot Y^w$ as required.
\proofend
\end{proof}

\subsection{Proof of Theorem~\ref{flatsS2}}

We now prove the conjecture of Stojisavljevi\'{c} and Zhang; that is, we give the proof of Theorem~\ref{flatsS2}.

\begin{proof} Fix the function $R := \sum \pi |z_i|^2$ on $\CC^2$  and identify $S^3$ with $\{R=1\}$. Then $S^3$ is a contact manifold with contact form $\alpha := \frac 12 \sum (x_i dy_i - y_i dx_i)|_{S^3}$.

Starshaped domains in $\CC^2$ can be defined by functions $f : S^3 \to (0, \infty)$ by setting $$U_f := \{ Rz \, | \, z \in S^3, \, R< f(z) \}.$$

Given a round metric on $S^2$ we can define a contact type hypersurface  $\Sigma \subset T^* S^2$ by $\Sigma = \{ |p|=1 \}$ where $|p|$ is defined using the metric. This has contact form $\beta:= \lambda|_{\Sigma}$ where $\lambda$ is the Liouville form $\lambda = \sum p_i dq_i$.

There is a double cover $\phi : S^3 \to \Sigma$ such that, if our metric on $S^2$ is suitably normalized, we have $\phi^* \beta = \alpha$. The map $\phi$ extends to give a double cover
$$\Phi : (0, \infty) \times S^3 \to (0, \infty) \times \Sigma, \, (t, z) \mapsto (t, \phi(z))$$
which is symplectic relative to the forms $d(t \alpha)$ and $d(t \beta)$.

There are diffeomorphisms
$$e : (0, \infty) \times S^3 \to \CC^2 \setminus \{ 0 \}, \, (t, z) \mapsto \sqrt{t}z$$
and
$$s:  (0, \infty) \times \Sigma \to T^* S^2 \setminus \OO, \, (t, p) \mapsto tp$$
satisfying $\frac 12 e^* \sum (x_i dy_i - y_i dx_i) = t \alpha$ and $s^* \lambda = t \beta$. Here $\OO$ denotes the zero section. Then $s \circ \Phi \circ e^{-1}$ is a symplectic double cover from $\CC^2 \setminus \{ 0 \} \to T^* S^2 \setminus \OO$.

Starshaped domains in $\CC^2$ and $T^* S^2$ correspond under these diffeomorphisms to graphs over $S^3$ or $\Sigma$ in $ (0, \infty) \times S^3$ and $ (0, \infty) \times \Sigma$ respectively.

We notice that if a starshaped domain $U$ in $\CC^2$ is invariant under $z \mapsto -z$ then $s \circ \Phi \circ e^{-1} : U \setminus \{ 0 \} \to T^* S^2$ is a double cover and adding $\OO$ to the image we get a starshaped domain in $T^* S^2$. As all of our starshaped domains $X_v$ are toric, they have this invariance and we can define a family of starshaped domains $Y_v$ in $T^* S^2$ parameterized by $v \in \tilde{Q}$. Hence $Y_v \setminus \OO$ is double covered by $X_v \setminus \{ 0 \}$.

Our correspondence from domains in $\CC^2$ to domains in $T^* S^2$ is equivariant with respect to scaling, where for domains in $\CC^2$ scaling by $T$ denotes the map $z \mapsto \sqrt{T}z$, and for domains in $T^* S^2$ we mean the map $p \mapsto Tp$. Since scaling realizes (roughly) optimal embeddings between the $X_v$, and also gives embeddings between the $Y_v$, we see that
$$  d^{T^* S^2}_{SM}(Y_v, Y_w) \sim \le d_{SM}(X_v, X_w).$$
We would like to prove the reverse inequality. For this, suppose there exists a symplectic embedding $f: Y_v \hookrightarrow Y_w$. Then the nearby Arnold conjecture for Lagrangian spheres, see \cite{hi}, \cite{hi2}, \cite{blw}, implies we can find a Hamiltonian diffeomorphism $h$ of $T^* S^2$ with $h(f(\OO)) = \OO$.  In fact, as $Y_w$ is starshaped, composing with rescalings we may assume the isotopy of $\OO$ remains in $Y_w$ and hence that $h$ may be chosen with compact support in $Y_w$. Thus without loss of generality we may assume $f(\OO) = \OO$. Next, as Hamiltonians which vanish on $\OO$ restrict to give the diffeomorphisms of $\OO$ isotopic to the identity, we may further assume $f$ restricts to either the identity or the antipodal map on $\OO$. Finally, by an application of Moser's Lemma, as in the the proof of Weinstein's Lagrangian neighborhood theorem, we assume that $f$ restricts either to the identity or to the differential of the antipodal map on a neighborhood of $\OO$.

Using our double cover, the restriction of $f$ to the complement of $\OO$ lifts to a symplectic embedding $g : X_v \setminus \{0 \} \to X_w \setminus \{ 0 \}$, and near $0$ the embedding $g$ is either the identity map, or (if $f$ is antipodal on $\OO$) multiplication by $i$. Hence $g$ extends to a map $X_v \to X_w$ and we conclude
$$ d^{T^* S^2}_{SM}(Y_v, Y_w) \ge d_{SM}(X_v, X_w)$$ as required.
\proofend
\end{proof}

\subsection{Large-scale differences from scalings}

We conclude by providing the promised proof of the result mentioned in Remark~\ref{rem:notsame}, that there is a large-scale difference between scaling and symplectic embedding.  We refer to the pseudo-metric $d_I$ defined there.

\begin{proposition}
\label{prop:notsame}
Let $Z$ denote the space of concave toric domains in $\mathbb{R}^4$.  The identity map 
\[ (Z,d_{SM}) \to (Z,d_I)\]
is not a quasi-isometry. 
\end{proposition}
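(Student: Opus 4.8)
The plan is to exhibit a sequence of concave toric domains for which the $d_I$-distance grows without bound while the $d_{SM}$-distance stays bounded; this is enough to rule out a quasi-isometry because $d_I \geq d_{SM}$ always holds (inclusions are symplectic embeddings), so a quasi-isometry would force $d_I \le A\, d_{SM} + B$, which the examples violate. The natural source of examples is the family of concave toric domains built in Section~\ref{sec:build}: for a fixed $n$ and parameters $v = (B_1,\ldots,B_N)$ with $N = 2n$, we have $X_v$ with weights $a_i = B_i^{-i/2}$ appearing $B_i^{i+1}$ times, plus negligible smoothing weights. The key point is that the $d_{SM}$-geometry of this family was completely computed --- Proposition~\ref{bm} shows $d_{SM}(X_v,X_w) \sim \max_i |\log(B_i/B_i')|$ --- whereas the $d_I$-geometry, governed by honest inclusions after the Liouville scaling $T\cdot(-) = \phi^{\ln T}(-)$, is more rigid because a concave toric domain scaled by $T$ is again concave with moment region dilated by $T$, and for such domains inclusion is detected exactly by the moment regions.

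\textbf{Key steps.} First I would fix a convenient two-parameter subfamily, say vary only $B_1$ and $B_2$ (with the remaining $B_k$ pinned at some large admissible constants so the constraints $8^2 \le B_k^2 \le B_{k+1}$ hold), so that the $d_{SM}$-distance between two such domains is $\sim \max(|\log(B_1/B_1')|, |\log(B_2/B_2')|)$. Second, I would choose a sequence of pairs $(v_j, w_j)$ in this subfamily for which $\max(|\log(B_1/B_1')|,|\log(B_2/B_2')|)$ is uniformly bounded --- e.g. $v_j$ and $w_j$ always differ by the same bounded ratios --- but where the \emph{shapes} of the moment regions become more and more incompatible under scaling as $j \to \infty$. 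Concretely, for a concave toric domain, $X \subset T\cdot Y$ as an inclusion forces the moment region $\Omega_X$ to sit inside $T\,\Omega_Y$; by tracking, say, the point where the region meets each axis (the Gromov width and the "dual" width) together with the total volume, one gets that $D(X_v, X_w) = d_I(X_v,X_w)$ (for toric domains, $d_I$ restricted here) is bounded below by something like $\max$ over these linear functionals of the moment data of $|\log(\text{ratio})|$. Third, I would pick the sequence so that this moment-data quantity blows up: for instance arrange that the ellipsoid $E(B_1^{-1/2}, B_1^{3/2})$ piece has its long axis growing relative to how the $B_2$-ellipsoid's axes grow, so that the $x$-intercept of $\Omega_{X_{v_j}}$ and the $x$-intercept of $\Omega_{X_{w_j}}$ have ratio that is bounded, while some other linear measurement (total width, or volume, or the intercept on the other axis) has ratio tending to infinity --- something that a bounded change in the $B_i$ ratios can produce because the exponents $i/2$ and $1+i/2$ amplify. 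Then $d_I(X_{v_j}, X_{w_j}) \to \infty$ while $d_{SM}(X_{v_j},X_{w_j})$ stays bounded, contradicting any quasi-isometry inequality $d_I \le A\,d_{SM}+B$.

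\textbf{Main obstacle.} The delicate step is the third one: one must actually produce parameters where a \emph{bounded} perturbation in every ratio $B_i/B_i'$ nonetheless forces an \emph{unbounded} scaling factor for the inclusion. This requires understanding exactly which linear functionals of the moment polygon obstruct inclusions of concave toric domains (Gromov width, volume, and the "second" width along the anti-diagonal are the natural candidates, all of which are monotone under inclusion and homogeneous under scaling), and then checking that these functionals, when evaluated on our domains $X_v$, depend on the $B_i$ through the amplified exponents $-i/2$ and $1+i/2$ in a way that is \emph{not} captured by $\max_i |\log(B_i/B_i')|$ --- i.e. two different linear functionals move at genuinely different logarithmic rates. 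A clean way to do this: compare the Gromov width ($\sim a_N = B_N^{-N/2}$, coming from the smallest weight / the end of the region near one axis) against the volume ($\sim \sum B_i \sim B_M$ for the appropriate $M$); a bounded change in the $B_i$ changes the log-volume by a bounded amount and the log-Gromov-width by a bounded amount, but their \emph{difference} can be made large, and since $d_I$ must dominate \emph{both} $|\log(\text{width ratio})|$ and $|\log(\text{volume ratio})|$ via the single scaling factor $T$, while also respecting that $T \cdot Y$ has its width and volume both scaled by the \emph{same} $T$, one gets $d_I \gtrsim |\log(\text{width}_v/\text{width}_w) - \log(\text{vol}_v^{1/2}/\text{vol}_w^{1/2})|$ or similar --- and this can be forced to infinity while $d_{SM}$ is bounded. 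Once the right functionals and the right sequence are identified, verifying the two bounds is routine bookkeeping with the explicit weight data recorded above.
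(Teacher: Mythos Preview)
Your strategy has a fatal gap: the family $X_v$ from Section~\ref{sec:build} cannot separate $d_{SM}$ from $d_I$, because on that family the two pseudometrics are already comparable. Look at the proof of Lemma~\ref{lem:upperbound}: the upper bound $d_{SM}(X_v,X_w)\le (N/2+1)\,\|v,w\|+1$ is obtained from \emph{inclusions} $\hat X_v\subset m^{N/2+1}\cdot\hat X_w$, so it is simultaneously an upper bound for $d_I$. Combined with Proposition~\ref{bm} (the lower bound for $d_{SM}$), this gives $d_I(X_v,X_w)\sim d_{SM}(X_v,X_w)\sim\|v,w\|$ on the image of the quasi-flat; Remark~\ref{rem:notsame} says exactly this. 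Thus if you keep all ratios $B_i/B_i'$ bounded, then $d_I(X_{v_j},X_{w_j})$ is bounded too, and your ``key step 3'' can never be carried out. Your proposed lower bound $d_I\gtrsim |\log(\text{width ratio})-\tfrac12\log(\text{vol ratio})|$ is also not correct: monotonicity of width and volume under inclusion only yields $d_I\ge\max(|\log(\text{width ratio})|,\tfrac12|\log(\text{vol ratio})|)$, a maximum rather than a difference, and each term here is bounded once the $B_i/B_i'$ are.

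The paper's proof uses a genuinely different mechanism. It compares $X_1=B^4(1)$ with the concave toric domain $X_2$ built from $B(1)\sqcup E(\eps,1/\eps)$. The long thin $E(\eps,1/\eps)$ piece forces the moment region of $X_2$ to have $x$-intercept of order $1/\eps$, so $d_I(X_1,X_2)=\ln(1+1/\eps)\to\infty$ as $\eps\to 0$. But symplectically $E(\eps,1/\eps)$ folds into $B^4(1)$ by McDuff--Schlenk, and then ECH capacities (sharp for concave-into-convex by \cite{CG}) show $d_{SM}(X_1,X_2)\le\ln 2$ uniformly in $\eps$. The point is that one needs a pair where a \emph{non-inclusion} symplectic embedding beats every inclusion by an unbounded factor; the $X_v$ family is engineered so that inclusions are already nearly optimal, which is precisely why it works for the quasi-flat construction but cannot work here.
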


\begin{proof}
Fix a small $\eps > 0$, let $X_1 = B^4(1)$ and let $X_2$ be the concave toric domain associated to 
$B(1) \cup E(\eps,1/\eps)$ as in Section~\ref{sec:build}.    
Then 
\[ d_I(X_1,X_2) = ln(1+1/\eps).\]
On the other hand, $X_1$ includes into $X_2$.  As for $X_2$ into $X_1$, $X_2$ is a concave toric domain and $X_1$ is a convex one.  So, ECH capacities are sharp for this problem by \cite{CG}.  Since $E(\eps,1/\eps)$ embeds into $B^4(1)$ for $\eps$ sufficiently small \cite{mcs}, the ECH capacities of $X_2$ are bounded from above by the ECH capacities of an $E(1,2)$ by the results summarized in Section~\ref{sec:ECH}.
In particular, 
\[ d_{SM}(X_1,X_2) \le ln(2).\]
Since the choice of small $\eps > 0$ is arbitrary, we conclude that the identity map can not be a quasi-isometry.
\proofend
\end{proof}

\end{document}